\documentclass[12pt]{article}
\usepackage[utf8]{inputenc}
\usepackage{enumitem}
\usepackage{amsmath}
\usepackage{amssymb}
\usepackage{booktabs}
\usepackage{setspace}
\usepackage{graphicx}
\usepackage{amsthm}
\usepackage{float}
\usepackage{subcaption}
\usepackage{geometry}
\usepackage{float}
\usepackage[affil-it]{authblk}
\geometry{
top=30mm,
bottom=30mm,
left=28mm,
right=28mm,
}

\usepackage[authoryear,round]{natbib}
\makeatletter 
\renewcommand\NAT@biblabel[1]{[#1]}

\usepackage[affil-it]{authblk}

\onehalfspacing
\usepackage{authblk}

\newtheorem{thm}{Theorem}
\newtheorem{lem}{Lemma}
\newtheorem{cor}{Corollary}
\newtheorem{prop}{Proposition}
\newtheorem{as}{Assumption}
\newtheorem{rem}{Remark}
\newcommand{\htau}{\hat{\tau}}
\newcommand{\ttau}{\tilde{\tau}}
\newcommand{\fn}{\frac{1}{n}}
\newcommand{\htheta}{\hat{\Theta}}

\newcommand{\shsigma}{\hat{\sigma}}
\newcommand{\lhs}{\hat{S}}
\newcommand{\shs}{\hat{s}}

\newcommand{\shgamma}{\hat{\gamma}}
\newcommand{\homega}{\hat{\Omega}}
\newcommand{\hsigma}{\hat{\Sigma}}
\newcommand{\hbeta}{\hat{\beta}}
\newcommand{\sbhat}{\hat{b}}
\newcommand{\shbeta}{\hat{\beta}}
\newcommand{\argmin}{\mathop{\rm argmin}\limits}

\newcommand*{\transpose}{%
  {\mathpalette\@transpose{}}%
  }
  
  \expandafter\let\expandafter\oldproof\csname\string\proof\endcsname
\let\oldendproof\endproof
\renewenvironment{proof}[1][\proofname]{%
  \oldproof[\bfseries \scshape #1]%
}{\oldendproof}

\usepackage{color}

\title{Small Tuning Parameter Selection for the Debiased Lasso}
\author{Akira Shinkyu\thanks{Email: akira.shinkyu.5589@gmail.com}\ \ and   Naoya Sueishi\thanks{Email: sueishi@econ.kobe-u.ac.jp}}
\providecommand{\keywords}[1]{\textbf{Keywords}: #1}
\affil{Kobe University\thanks{Graduate School of Economics, 2-1 Rokkodai-cho, Nada-ku, Kobe, 657-8501, JAPAN.}}
\date{\today}
 \allowdisplaybreaks
\begin{document}
\maketitle
\begin{abstract}
In this study, we investigate the bias and variance properties of the debiased Lasso in linear regression when the tuning parameter of the node-wise Lasso is selected to be smaller than in previous studies. 
We consider the case where the number of covariates $p$ is bounded by a constant multiple of the sample size $n$.
First, we show that the bias of the debiased Lasso can be reduced without diverging the asymptotic variance by setting the order of the tuning parameter to $1/\sqrt{n}$.
This implies that the debiased Lasso has asymptotic normality provided that the number of nonzero coefficients $s_0$ satisfies $s_0=o(\sqrt{n/\log p})$, whereas previous studies require $s_0 =o(\sqrt{n}/\log p)$ if no sparsity assumption is imposed on the precision matrix.
Second, we propose a data-driven tuning parameter selection procedure for the node-wise Lasso that is consistent with our theoretical results. 
Simulation studies show that our procedure yields confidence intervals with good coverage properties in various settings. 
We also present a real economic data example to demonstrate the efficacy of our selection procedure.
\end{abstract}
\keywords{Lasso, debiasing, high-dimensional data,  confidence intervals, hypothesis testing.}

\section{Introduction}
In this study, we make inferences about individual coefficients of the linear regression model:  
\begin{equation}
    Y=X\beta_0+\epsilon, \label{100}
\end{equation}
where $Y \in \mathbb{R}^n$ denotes the response, $X=(X_1, \dots, X_p) \in \mathbb{R}^{n \times p}$ denotes the random design matrix, $\beta_0\in \mathbb{R}^p$ denotes the unknown regression coefficient, and $\epsilon\in \mathbb{R}^n \sim N(0, \sigma_\epsilon^2 I)$ denotes the unobservable noise that is independent of $X$.
We consider $p >n$ and assume that $X$ has zero mean and a positive definite covariance matrix $\Sigma=E[X^TX/n]$.
We focus on statistical inferences for $\beta_{01}$, the first component of $\beta_0$, although the inferences are equally valid for other components. \par

Much research has been conducted on high-dimensional linear regression models.
The Lasso (\citealt{t1996}) is one of the most popular methods for estimating their coefficients. The popularity is due to variable selection properties, the oracle inequalities (\citealt{brt2009}, \citealt{bv2011}), and useful computational algorithms (\citealt{ehjt2004}, \citealt{fht2010}). 
\cite{bv2011} gave excellent overviews of the Lasso.
However, the asymptotic distribution of the Lasso is generally intractable because the Lasso is biased and the distribution is not continuous (\citealt{kf2000}).
Therefore, it is difficult to perform statistical inference based on the Lasso itself, and there are currently three main approaches of performing statistical inference in high-dimensional linear regression models. \par

The first approach of inference is the so-called selective inference. 
See \cite{bbbzz2013}, \cite{lsst2016}, \cite{ttlt2016}, and \cite{trtw2018}, among others. 
Statistical inferences after model selection are distorted if the selected model is treated as if it is a true one.
Rather than making inferences on coefficients of the true regression model, the selective inference makes inferences on coefficients in a model selected via data-driven procedures.
The validity of the inferences does not depend on the correctness of the selected model.
\par

The second approach is the orthogonalization approach, which is developed in econometrics. 
Some contributions include the post-double selection method of \cite{bch2014}, projection onto the double selection method of \cite{whx2020}, and double machine learning method of \cite{ccddhnr2018}.
Such methods perform inference on low-dimensional parameters when high-dimensional nuisance parameters appear in regression models. 
The estimators of the parameter of interest are derived through the orthogonal score function, which is insensitive to the estimation of nuisance parameters. \par

The third approach is to debias or desparsify the Lasso, which we consider in this study. 
The debiased Lasso was proposed by \cite{zz2014} and further developed by \cite{vbrd2014} and \cite{jm2014}. 
Unlike selective inference, this approach is intended to make inferences on coefficients in the true linear regression model.
If the true data generating process is sufficiently sparse, the estimator is asymptotically normally distributed, so that inference is possible even when $p>n$.
 \par

The performance of the debiased Lasso depends on the degree of sparsity and the choice of tuning parameters.
To obtain the debiased Lasso estimator of $\beta_{01}$, we need an estimate of $\Theta_1$, the first column of $\Sigma^{-1}$, and the node-wise Lasso (\citealt{mb2006}) is commonly used to obtain it. 
Therefore, the debiased Lasso estimator depends on the tuning parameters of the Lasso $\lambda_0$ and node-wise Lasso $\lambda_1$.  
Let $s_0$ and $s_1$ be the numbers of nonzero components in $\beta_0$ and $\Theta_1$, respectively.
If both $\lambda_0$ and $\lambda_1$ are of order $\sqrt{\log p/n}$, the asymptotic normality of the debiased Lasso is shown for $s_0=o(\sqrt{n}/\log p)$ and $s_1=o(n/\log p)$ (\citealt{vbrd2014}, \citealt{zz2014}) or for $s_0=o(n/\|\Theta_1\|_1^2(\log p)^2)$ and $\min \{s_0,s_1\}=o(\sqrt{n}/\log p)$ (\citealt{jm2018}). 
Recently, \cite{bz2022} showed asymptotic normality of the debiased Lasso  when $\max\{s_0,s_1\}=o(n/\log p)$ and $\min\{s_0,s_1\}=o(\sqrt{n}/\log p)$ using degrees of freedom adjustments.\par

Although the above studies significantly contributed to the development of the debiased Lasso, there are two main issues to be addressed. 
First, the bias of the debiased Lasso may not be asymptotically negligible if $\Theta_1$ and $\beta_0$ are not sufficiently sparse. 
If no sparsity condition is imposed on $\Theta_1$, $s_0$ must satisfy $s_0 = o(\sqrt{n}/\log p)$ for zero-mean asymptotic normality, which is rather restrictive given that only $s_0=o(n/\log p)$ is need for consistent estimation.
Second, existing tuning parameter selection procedures of $\lambda_1$, such as the cross-validation (\citealt{clc2021}) and the methods of \cite{zz2014} and \cite{dbmm2015}, often produce a rather large bias in the debiased Lasso, yielding a poor coverage of confidence intervals and large size distortion of $t$ tests.
 \par

We address these two issues by selecting a smaller tuning parameter of the node-wise Lasso than in prior studies.
First, we show that by setting the order of $\lambda_1$ to $1/\sqrt{n}$, the bias of the debiased Lasso can be reduced without diverging the asymptotic variance, so that asymptotic normality holds provided $s_0=o(\sqrt{n/\log p})$. 
The novelty of our method is that we intentionally overfit the node-wise Lasso to reduce the bias of the debiased Lasso. 
Further, our analysis does not impose any sparsity assumption on $\Theta_1$.
To the best of our knowledge, there is no asymptotic normality result of the debiased Lasso when the order of $\lambda_1$ is $1/\sqrt{n}$.
Moreover, with the exception of \cite{v2019}, no prior study, including \cite{bch2014} and \cite{ccddhnr2018}, has established asymptotic normality without imposing any sparsity condition on $\Theta_1$.
A sufficient condition for our result is that each row vector of $X$ is independent Gaussian and $p \leq C n$ for some $C< \infty$.
Although the condition that $p$ and $n$ be of the same order excludes some high-dimensional data, such as genomic data, various data fit the condition in many fields, such as economics.

Second, we propose a data-driven procedure to select $\lambda_1$, which we call the small tuning parameter selector (STPS). 
We modify the selection method of \cite{zz2014} so that the selector chooses a tuning parameter with a reasonably small value.
Figure 1 compares the distributions of the debiased Lasso whose $\lambda_1$ is selected by cross-validation and STPS, respectively, indicating that STPS successfully reduces the bias.
Simulation studies show that the debiased Lasso tuned by our procedure outperforms other inference methods, such as the double machine learning, post-double selection, and debiased Lasso with other tuning parameter selection methods. 
This is particularly true when $\beta_0$ and $\Theta_1$ are dense. 
We also show a real economic data example to see the performance of our method.
Because the true sparsity of $\beta_0$ and $\Theta_1$ is unknown, it is essential to propose a selection method that is robust to the violation of the sparsity condition. \par
\begin{figure}[ht]
\centering
\begin{subfigure}[t]{.49\linewidth} 
\centering
\caption{10-fold cross-validation.} 
 \includegraphics[scale=0.65]{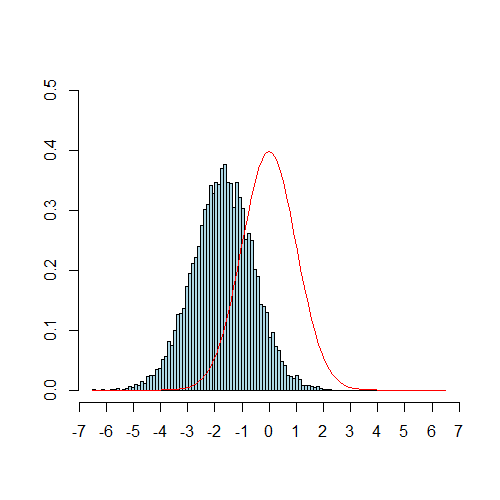}
 \end{subfigure}
 \begin{subfigure}[t]{.49\linewidth}
 \centering
 \caption{STPS.}
 \includegraphics[scale=0.65]{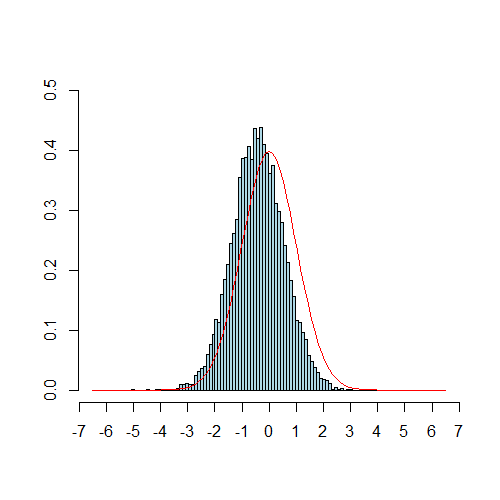}
 \end{subfigure}
  \caption{Distributions of the studentized debiased Lasso (histogram) and the standard normal distribution (solid line). We employ the simulation design II of Table II in \cite{k2020} with $n=200$ and $p=400$ and repeat experiments 10,000 times. The tuning parameter $\lambda_0$ of the Lasso to be debiased is selected by the 10-fold cross-validation. The error variance is estimated by the sample mean of squared residuals from the Lasso tuned by the one standard error rule. For panel (a), the 10-fold cross-validation selects $0.13$ on average, and the coverage of 95\% confidence interval is 59.7\% with the average length of 0.208. For panel (b), STPS selects $0.001$ on average, and the coverage of 95\% confidence interval is 94.3\% with the average length of 0.459.}
\end{figure}

We emphasize that the aim of this study is not to obtain an asymptotically efficient estimator (see, e.g.,  \citealt{vbrd2014}, \citealt{jv2018}, \citealt{v2019}, and \citealt{bz2022} about the asymptotic efficiency of the debiased Lasso).
The aim is to construct confidence intervals with good coverage properties or test statistics with the correct size.
There is a bias--variance tradeoff with respect to $\lambda_1$.
Our method reduces the bias at the cost of slightly increasing the variance.
Setting the order of $\lambda_1$ to $1/\sqrt{n}$ maintains the $\sqrt{n}$-consistency of the debiased Lasso, although the asymptotic variance may exceed the efficiency bound.
\par

The remainder of this article is organized as follows. In Section 2, we formally state an asymptotic normality result of the debiased Lasso when the order of $\lambda_1$ is $1/\sqrt{n}$. 
In Section 3, we propose a data-driven method to select $\lambda_1$. 
In Section 4, we show results of extensive simulation studies for the debiased Lasso tuned by our procedure and other statistical inference methods. We present a real economic data example based on the considered methods in Section 5. Finally, concluding remarks are presented in Section 6. 
All proofs are given in Appendix.

\section{Theoretical Results}
We define $|S|$ as the cardinality of a set $S$. Let $a^T$ be the transpose of a vector $a$. 
Let $\|a\|$, $\|a\|_1$, and $\|a\|_{\infty}$ be the Euclidean norm, $\ell_1$ norm, and maximum element in the absolute value of a vector $a$, respectively. For any sequences of real numbers $\{a_n\}$ and $\{b_n\}$, $a_n \asymp b_n$ means $C_1\leq a_n/b_n \leq C_2$ for all $n$ and some $C_1, C_2 >0$.
Let $a_n \ll b_n$ and $b_n \gg a_n$ denote $\lim_{n\rightarrow \infty } |a_n/b_n|=0$. 
Let $a_n \lesssim b_n$ and $b_n \gtrsim a_n$ denote $a_n\leq Cb_n$ for all $n$ and some $C>0$. 
Let us denote $\lambda_{\min} (A)$ and $\lambda_{\max}(A)$ as the minimum and maximum eigenvalues for a matrix $A$, respectively.

\subsection{Bias of the Debiased Lasso}
First, we introduce the debiased Lasso. 
The Lasso for $\beta_0$ is defined as follows:
\begin{equation}
\hbeta:= \argmin_{\beta \in \mathbb{R}^p}\fn \|Y-X\beta\|^2+2\lambda_0\|\beta\|_1, \label{111}
\end{equation}
where $\lambda_0>0$ denotes the tuning parameter. 
The debiased Lasso $\sbhat_1$ for $\beta_{01}$ is given by
\begin{equation}
    \sbhat_1=\hbeta_1+\frac{1}{n}\htheta_1{\! \! }^TX^T(Y-X\hbeta), \label{115}
\end{equation}
where $\hbeta_1$ denotes the first component of $\hbeta$, and $\htheta_1$ denotes an estimate of $\Theta_1$.

Although it is not a requisite, $\htheta_1$ is commonly constructed by the node-wise Lasso.
Let $X_{-1}$ be a submatrix of $X$ without $X_1$. 
We define
\begin{equation*}
    \shgamma_1:=\argmin_{\gamma \in \mathbb{R}^{p-1}}\fn\|X_1-X_{-1}\gamma\|^2+2\lambda_1\|\gamma\|_1,
\end{equation*}
where $\lambda_1>0$ is another tuning parameter. 
Let $\tau_1^2$ denote the error variance in regressing $X_1$ on $X_{-1}$.
Moreover, let us define $\ttau_1^2$ and $\htau_1^2$ as
\begin{equation*}
    \ttau_1^2:=\fn \|X_1-X_{-1}\shgamma_1\|^2,\ \ \htau_1^2:=\ttau_1^2+\lambda_1\|\shgamma_1\|_1.
\end{equation*}
The Karush-Kuhn--Tucker (KKT) conditions for the node-wise Lasso yield $\htau_1^2=X_1^T(X_1-X_{-1}\shgamma_1)/n$. Then, $\htheta_1$ is given by 
\begin{equation*}
    \htheta_1:=\frac{1}{\htau_1^2}(1,-\shgamma_1^T)^T.
\end{equation*}
By a simple calculation, we get
\begin{equation}
    \sqrt{n}(\sbhat_1-\beta_{01})=\frac{1}{\sqrt{n}}\htheta_1^TX^T\epsilon+\Delta_1, \label{140}
\end{equation}
where 
\begin{equation*}
    \Delta_1:=\sqrt{n}(\hsigma\htheta_1-e_1)^T(\beta_0-\shbeta),
\end{equation*}
and $e_1$ is the unit vector whose first element is unity. \par

From \eqref{140}, the proof of asymptotic normality hinges on showing that the bias term $\Delta_1$ is asymptotically negligible.
For $\min\{s_0, s_1\}= o(\sqrt{n}/ \log p)$, \cite{jm2018} and \cite{bz2022} developed skillful techniques to evaluate the bias. 
Otherwise, it is common to rely on the $\ell_1$-$\ell_\infty$ H\"{o}lder inequality:
\begin{equation}
    |\Delta_1| \leq \sqrt{n}\|\hsigma \htheta_1-e_1\|_\infty\|\shbeta-\beta_0\|_1\leq \sqrt{n}\frac{\lambda_1}{\htau_1^2}\|\shbeta-\beta_0\|_1. \label{148}
\end{equation}
The last inequality in \eqref{148} is due to the KKT conditions for the node-wise Lasso. 
Under certain conditions, the Lasso satisfies $\| \hat{\beta}-\beta_0 \|_1=O_p(s_0 \sqrt{\log p/n})$ and the convergence rate cannot be improved.
Thus, the order of the bias significantly depends on $\lambda_1/\hat{\tau}_1^2$.
It has been shown that $\lambda_1/\hat{\tau}_1^2 =O_p(\sqrt{\log p/n})$ for the choice of $\lambda_1 \asymp \sqrt{\log p/n}$ in previous studies, such as \cite{vbrd2014}.
They have shown that $1/\hat{\tau}_1^2=O_p(1)$ using the fact that $\hat{\gamma}_1$ converges to its population counterpart if $\lambda_1 \asymp \sqrt{\log p/n}$ and $s_1 =o(n/\log p)$.\par

In this study, we present a new evaluation method of the bias by showing that $1/ \hat{\tau}_1^2=O_p(1)$ for $\lambda_1 \asymp 1/\sqrt{n}$.
Different from previous studies, we do not need consistency of the node-wise Lasso. 
Instead, we bound $1/\htau_1^2$ using the properties of the node-wise Lasso derived from the general position assumption and conditions on the restricted eigenvalues of $\hat{\Sigma}$. 
This allows us not to restrict the sparsity of $\Theta_1$. 
A crucial point to bound $1/\htau_1^2$  is bounding  $1/\ttau_1^2$, which is an upper bound of  $1/\htau_1^2$ and the variance factor of the debiased Lasso. 
Because \cite{zz2014} showed that $1/\ttau_1^2$ is a nonincreasing function in $\lambda_1$, selecting too small $\lambda_1$ may make $1/\ttau_1^2$ and the variance diverge. 
We show that the debiased Lasso is $\sqrt{n}$-consistent even for $\lambda_1 \asymp 1/\sqrt{n}$, although the asymptotic variance may be greater than when $\lambda_1 \asymp \sqrt{\log p/n}$.

\par

\subsection{Asymptotic Normality with Small Tuning Parameter}
To obtain the new bound for the bias, we first introduce four generic assumptions.
Then, we show that the assumptions are satisfied under rather simple conditions.
Roughly speaking, our assumptions are satisfied if $p$ is bounded by a constant multiple of $n$.

The first assumption is the general position assumption of \cite{t2013}.
 \begin{as}
 \label{a1}
 \textup{For any $k<\min \{n,p\}$, the affine span of $\sigma_{1} X_{i_1},\ldots, \sigma_{k+1}X_{i_{k+1}}$ for arbitrary signs $\sigma_1,\ldots,\sigma_{i_{k+1}} \in \{-1,1\}$ does not include any element of $\{\pm X_i: i\neq i_1,\ldots,i_{k+1}\}.$}
 \end{as}
\cite{t2013} showed that the Lasso is unique and selects at most $\min\{n,p\}$ covariates if each column vector of $X$ is in general position. 
This assumption also implies that the covariates selected by the Lasso are linearly independent with probability one.
 A sufficient condition for the general position assumption is that the distribution of each element in $X$ is absolutely continuous with respect to the Lebesgue measure on $\mathbb{R}^{np}$. See Lemma 4 in \cite{t2013}. \par

Before introducing the next assumption, we define the restricted maximum and minimum eigenvalues of the sample covariance of $X$:
 \begin{equation*}
    \phi_{\max}(s):=\max_{1 \leq \|z\|_0\leq s}\frac{\|Xz\|^2}{n\|z\|^2},\ \ \phi_{\min}(s):=\min_{1 \leq \|z\|_0\leq s}\frac{\|Xz\|^2}{n\|z\|^2}.
\end{equation*}
These two quantities also play crucial roles in evaluating $1/\ttau_1^2.$ \par

\begin{as}
\label{a2}
\textup{There are a positive constant $C_{\min}$ and a positive constant $K$ such that
\begin{equation*}
   \lim_{n \rightarrow \infty}P\left(\phi_{\min}\left(\frac{n}{K}\right)\geq C_{\min}\right)=1.
\end{equation*}
}
\end{as}
We assume that $n$ is divisible by $K$ to simplify the notation. 
Under this assumption, the minimum eigenvalues of all sub-sample covariance matrices formed by less than $n/K$ covariates are bounded below with probability tending to one. 
Assumption \ref{a1} only tells us that the node-wise Lasso selects less than or equal to $n$ covariates.
Assumption \ref{a2} plays the role of bounding $1/\ttau_1^2$ above when the node-wise Lasso selects less than $n/K$ covariates. \par

\begin{as}
\label{a3}
\textup{There is a positive constant $C_{\max}$ such that 
\begin{equation*}
    \lim_{n \rightarrow \infty}P(\phi_{\max}(n)\leq C_{\max})=1.
\end{equation*}
}
\end{as}
Under this assumption, the maximum eigenvalues of all sub-sample covariance matrices formed by no greater than $n$ covariates are bounded above with probability tending to one. This assumption is to bound $1/\ttau_1^2$ above when the node-wise Lasso selects at least $n/K$ covariates.  \par

Under Assumptions 1--3, we can bound $1/\ttau_1^2$ above even when $\lambda_1 \asymp 1/\sqrt{n}$.
\begin{lem}\label{lem1}
Suppose that Assumptions \ref{a1}--\ref{a3} hold and let $\lambda_1$ satisfy $\lambda_1 \ge C_1 /\sqrt{n}$ for some $C_1 >0$. Then, we have
\begin{equation*}
    \lim_{n\rightarrow \infty}P\left(\frac{1}{\ttau_1^2}\leq \frac{1}{C_{\min}}+\frac{C_{\max}K}{C_1^2}\right )=1.
\end{equation*}
\end{lem}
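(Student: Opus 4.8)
The plan is to bound $\ttau_1^2$ from below on the high-probability event where Assumptions \ref{a2} and \ref{a3} both hold, intersected with the probability-one general position event of Assumption \ref{a1}. Writing $z:=(1,-\shgamma_1^T)^T\in\mathbb{R}^p$, we have $X_1-X_{-1}\shgamma_1=Xz$ and hence $\ttau_1^2=\|Xz\|^2/n$. Let $\hat{s}_1:=\|\shgamma_1\|_0$ denote the size of the active set of the node-wise Lasso; by Assumption \ref{a1} the selected covariates are linearly independent, so $\hat{s}_1\le n$ with probability one. The argument then splits into two regimes according to whether $1+\hat{s}_1\le n/K$ or $1+\hat{s}_1> n/K$ (recall $n/K$ is taken to be an integer), and in each regime one of the two terms in the stated bound is produced.

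In the small-support regime $1+\hat{s}_1\le n/K$, I would invoke the restricted minimum eigenvalue directly. Since $z$ has $\|z\|_0=1+\hat{s}_1$ nonzero entries and $\|z\|^2=1+\|\shgamma_1\|^2\ge 1$, the definition of $\phi_{\min}$ gives
\begin{equation*}
\ttau_1^2=\frac{\|Xz\|^2}{n}=\frac{\|Xz\|^2}{n\|z\|^2}\,\|z\|^2\ge \phi_{\min}(1+\hat{s}_1).
\end{equation*}
Because $\phi_{\min}(\cdot)$ is nonincreasing and $1+\hat{s}_1\le n/K$, Assumption \ref{a2} yields $\ttau_1^2\ge\phi_{\min}(n/K)\ge C_{\min}$, so $1/\ttau_1^2\le 1/C_{\min}$.

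In the large-support regime $1+\hat{s}_1> n/K$ (so $\hat{s}_1\ge n/K$), the restricted eigenvalue bound is useless because $\phi_{\min}(1+\hat{s}_1)$ need not be bounded away from zero, and here the small tuning parameter enters. Let $r:=Xz$ be the residual and let $S$ be the active set, with $X_S$ the corresponding columns of $X_{-1}$. The KKT conditions give $X_j^Tr/n=\lambda_1\,\mathrm{sign}(\shgamma_{1,j})$ for each $j\in S$, whence $\|X_S^Tr\|^2=\hat{s}_1\,n^2\lambda_1^2$. On the other hand, viewing any combination of the columns in $S$ as $Xw$ with $\|w\|_0\le\hat{s}_1\le n$, Assumption \ref{a3} gives $\lambda_{\max}(X_S^TX_S)\le n\,\phi_{\max}(\hat{s}_1)\le nC_{\max}$, so that $\|X_S^Tr\|^2\le nC_{\max}\|r\|^2=n^2C_{\max}\ttau_1^2$. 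Combining the two expressions for $\|X_S^Tr\|^2$ and using $\lambda_1\ge C_1/\sqrt{n}$ together with $\hat{s}_1\ge n/K$ gives
\begin{equation*}
\ttau_1^2\ge \frac{\hat{s}_1\lambda_1^2}{C_{\max}}\ge \frac{(n/K)(C_1^2/n)}{C_{\max}}=\frac{C_1^2}{KC_{\max}},
\end{equation*}
hence $1/\ttau_1^2\le C_{\max}K/C_1^2$.

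Taking the two regimes together, on the event that Assumptions \ref{a1}--\ref{a3} hold we always have $1/\ttau_1^2\le\max\{1/C_{\min},\,C_{\max}K/C_1^2\}\le 1/C_{\min}+C_{\max}K/C_1^2$; since this event has probability tending to one, the claim follows. I expect the main obstacle to be the large-support regime: the key idea is that an overfit node-wise Lasso active set carries a large aggregate gradient norm ($\hat{s}_1 n^2\lambda_1^2$ from the KKT sign equations), which a bounded maximum eigenvalue forces the residual norm $\|r\|^2=n\ttau_1^2$ to absorb, so that a larger active set actually pushes $\ttau_1^2$ \emph{up} rather than down. Making this comparison rigorous requires the monotonicity of $\phi_{\max}$ together with the general-position bound $\hat{s}_1\le n$, which is exactly what keeps $\phi_{\max}(\hat{s}_1)$ below $C_{\max}$.
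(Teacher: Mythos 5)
Your proposal is correct, and it follows the same overall strategy as the paper's proof: bound $\hat{s}_1\le n$ via general position, split at $\hat{s}_1 \lessgtr n/K$, and use $\phi_{\min}$ in the small-support regime and the KKT conditions together with $\phi_{\max}$ in the large-support regime. The technical execution differs in a way worth noting. The paper starts from the exact identity $\ttau_1^2=\tfrac{1}{n}X_1^TQ_{\hat S_1}X_1+\lambda_1^2\kappa_{\hat S_1}^T\bigl(\tfrac{1}{n}X_{\hat S_1}^TX_{\hat S_1}\bigr)^{-1}\kappa_{\hat S_1}$, bounds the first term below by $\phi_{\min}(1+\hat s_1)$ via a partitioned-inverse fact (the first diagonal element of the inverse Gram matrix), and the second by $\tfrac{\hat s_1}{n}\tfrac{C_1^2}{\phi_{\max}(\hat s_1)}$, obtaining a single deterministic inequality $1/\ttau_1^2\le\min\{1/\phi_{\min}(1+\hat s_1),\,n\phi_{\max}(\hat s_1)/(\hat s_1 C_1^2)\}$ before doing the case split. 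You instead prove each regime's bound directly: writing $\ttau_1^2=\|Xz\|^2/n$ with $\|z\|^2\ge 1$ gives the $\phi_{\min}$ bound immediately, and the identity $\|X_{\hat S_1}^Tr\|^2=\hat s_1 n^2\lambda_1^2$ from the KKT sign equations, combined with the operator-norm bound $\|X_{\hat S_1}^Tr\|^2\le n\phi_{\max}(\hat s_1)\|r\|^2$, gives the same quantitative conclusion $\ttau_1^2\ge\hat s_1\lambda_1^2/C_{\max}$ without ever forming the Gram inverse. Your route is more elementary (no partitioned-inverse lemma, no need for the exact two-term decomposition of $\ttau_1^2$), at the cost of not exhibiting the clean additive lower bound that the paper records; both yield the same constant $1/C_{\min}+C_{\max}K/C_1^2$ and the same probability statement.
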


Lemma \ref{lem1} implies that the convergence rate of the bias is improved over previous studies.
\cite{vbrd2014} and \cite{jm2018} obtain $\lambda_1/ \hat{\tau}_1^2=O_p(\sqrt{\log p/n})$ under the assumption that $s_1=o_p(n/\log p)$. 
\cite{v2019} does not assume the sparsity of $\Theta_1$ but obtains the same convergence rate of the bias.
Meanwhile, we have $\lambda_1/\hat{\tau}_1^2=O_P(1/\sqrt{n})$, and our derivation does not need any sparsity on $\Theta_1$.
Therefore, if the Lasso satisfies $\| \hat{\beta} - \beta_0 \|_1=O_p(s_0 \sqrt{\log p/n})$,  zero-mean asymptotic normality is satisfied for $s_0=o(\sqrt{n/\log p})$.
Lemma 1 also indicates that if we allow $1/C_{\min}$, $C_{\max}$, or $K$ to diverge, the order of $\lambda_1/\hat{\tau}_1^2$ will be greater than $O_P(1/\sqrt{n})$ and $s_0$ should be less than $o(\sqrt{n/\log p})$ for asymptotic normality.
\par 
Before we formally state the main result, we define the restricted eigenvalue of the sample covariance introduced in \cite{brt2009}: 
\begin{equation*}
    \kappa(s,c_0)^2:=\min_{\substack{S\subset\{1,\ldots,p\}\\ |S|\leq s}}\min_{\substack{z\neq 0 \\ \|z_{S^c}\|_1\leq c_0\|z_{S}\|_1}}\frac{\|Xz\|^2}{n\|z_{S}\|^2}.
\end{equation*} 
We put the following assumptions on $\phi_{\max}(1)$ and the restricted eigenvalue.
\begin{as}
\label{a4}
\textup{(i) There exists a positive constant $C^*$ such that 
\begin{equation*}
    P(\phi_{\max}(1)>C^*)=O\left(\frac{1}{p}\right).
\end{equation*} (ii) For $s_0=o(\sqrt{n/\log p})$, there exists a positive constant $\kappa_{\min}$ such that}
\begin{equation*}
    \lim_{n\rightarrow \infty }P(\kappa(s_0,3)^2\geq \kappa_{\min}^2)=1.
\end{equation*}
\end{as}
We use these assumptions to derive the oracle inequalities of the Lasso, which are required to evaluate the bias $\Delta_1$. The first assumption holds, for instance, if each row vector of $X$ is independent Gaussian.
Further imposing $n \gtrsim s_0 \log (p/s_0)$, the second assumption is also satisfied.
See Theorem 6 of \cite{rz2013}.

\par

Now, we present a result of asymptotic normality for the debiased Lasso when $\lambda_1 \asymp 1/\sqrt{n}$.

\begin{thm}\label{thm1}
Suppose Assumptions \ref{a1}--\ref{a4} hold and $s_0$ satisfies $s_0=o(\sqrt{n/\log p})$.  
Then, for suitably selected $\lambda_0 \asymp\sqrt{\log p/n}$ and $\lambda_1 \asymp 1/\sqrt{n}$,  we have 
\begin{equation*}
    \sqrt{n}(\sbhat_1-\beta_{01})=W_1+\Delta_1,
\end{equation*}
\begin{equation*}
    W_1|X \sim N(0,\sigma_\epsilon^2 \homega_{1,1}),
\end{equation*}
\begin{equation*}
    \homega_{1,1}:=\htheta_1{\! \!}^ T\hsigma\htheta_1=O_P(1),
\end{equation*}
\begin{equation*}
    \Delta_1=o_P(1).
\end{equation*}
\end{thm}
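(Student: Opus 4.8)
The plan is to verify the four displayed conclusions in turn, relying on the decomposition already recorded in \eqref{140} and on Lemma \ref{lem1}. The first identity $\sqrt{n}(\sbhat_1-\beta_{01})=W_1+\Delta_1$ is exactly \eqref{140} once we set $W_1:=\htheta_1^TX^T\epsilon/\sqrt{n}$, so no further work is needed there. For the conditional normality I would condition on the design $X$. The node-wise Lasso solution $\shgamma_1$, and hence $\htau_1^2$ and $\htheta_1$, are measurable functions of $X$ alone and do not involve $\epsilon$; since $\epsilon\mid X\sim N(0,\sigma_\epsilon^2 I)$, the quantity $W_1$ is conditionally a fixed linear functional of a Gaussian vector, hence Gaussian with mean zero. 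Its conditional variance is $\sigma_\epsilon^2\,\htheta_1^T(X^TX/n)\htheta_1=\sigma_\epsilon^2\,\htheta_1^T\hsigma\htheta_1=\sigma_\epsilon^2\homega_{1,1}$, which yields the second display and, by definition, identifies $\homega_{1,1}$.

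To show $\homega_{1,1}=O_P(1)$ I would exploit the explicit form $\htheta_1=(1,-\shgamma_1^T)^T/\htau_1^2$. Writing $v:=(1,-\shgamma_1^T)^T$, one has $Xv=X_1-X_{-1}\shgamma_1$, so $v^T\hsigma v=\|X_1-X_{-1}\shgamma_1\|^2/n=\ttau_1^2$, whence $\homega_{1,1}=\ttau_1^2/\htau_1^4$. Because $\htau_1^2=\ttau_1^2+\lambda_1\|\shgamma_1\|_1\ge\ttau_1^2>0$, this is bounded above by $1/\ttau_1^2$, which is $O_P(1)$ by Lemma \ref{lem1}.

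For the bias I would use the Hölder bound \eqref{148} and handle its three factors separately. The choice $\lambda_1\asymp 1/\sqrt{n}$ makes $\sqrt{n}\lambda_1=O(1)$; Lemma \ref{lem1} together with $\htau_1^2\ge\ttau_1^2$ gives $1/\htau_1^2=O_P(1)$; and the Lasso oracle inequality yields $\|\shbeta-\beta_0\|_1=O_P(s_0\sqrt{\log p/n})$. Multiplying the three factors gives $|\Delta_1|=O_P(s_0\sqrt{\log p/n})$, which is $o_P(1)$ precisely under the hypothesis $s_0=o(\sqrt{n/\log p})$.

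The main obstacle is the oracle inequality underlying the third factor. Establishing $\|\shbeta-\beta_0\|_1=O_P(s_0\sqrt{\log p/n})$ requires controlling the empirical-process term $\|X^T\epsilon/n\|_\infty=O_P(\sqrt{\log p/n})$ by a Gaussian maximal inequality and choosing $\lambda_0\asymp\sqrt{\log p/n}$ so that the event $\{\lambda_0\ge 2\|X^T\epsilon/n\|_\infty\}$ has probability tending to one, and then invoking the restricted-eigenvalue bound of Assumption \ref{a4}(ii) on the cone in which the Lasso error lies. The delicate point is bookkeeping over the high-probability events: one must verify that the events underlying Assumptions \ref{a1}--\ref{a4}, the oracle inequality, and Lemma \ref{lem1} hold simultaneously, so that the product bound for $\Delta_1$ is valid on an event of probability tending to one.
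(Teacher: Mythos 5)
Your proposal is correct and follows essentially the same route as the paper: the decomposition \eqref{140}, the identity $\homega_{1,1}=\ttau_1^2/\htau_1^4\leq 1/\ttau_1^2$ controlled via Lemma \ref{lem1}, and the H\"{o}lder bound \eqref{148} combined with the Lasso oracle inequality (obtained from Assumption \ref{a4} and the choice $\lambda_0\asymp\sqrt{\log p/n}$) to conclude $|\Delta_1|=O_P(s_0\sqrt{\log p/n})=o_P(1)$. Your explicit verification of the conditional normality of $W_1$ and of the identity $v^T\hsigma v=\ttau_1^2$ merely spells out steps the paper leaves implicit.
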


Our variance of the debiased Lasso has the same form as that of \cite{vbrd2014}. 
However, our variance is typically larger than theirs because our $\hat{\Theta}_1$ may not be sparse due to small $\lambda_1$.
Moreover, $\hat{\Omega}_{1,1}$ may not converge in probability to the $(1,1)$ component of $\Sigma^{-1}$.

We can also establish asymptotic normality for the studentized estimator.
\begin{cor}\label{cor1}
Suppose that the assumption of Theorem 1 holds. 
Let $\shsigma_\epsilon^2$ be a consistent estimator for $\sigma_\epsilon^2$. Then, we have 
\begin{equation*}
\frac{\sqrt{n}(\sbhat_1-\beta_{01})}{\shsigma_\epsilon\homega_{1,1}^{1/2}}\xrightarrow{d}N(0,1).
\end{equation*}
\end{cor}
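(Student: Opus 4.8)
The plan is to derive the corollary from the decomposition $\sqrt{n}(\sbhat_1-\beta_{01})=W_1+\Delta_1$ of Theorem \ref{thm1} via Slutsky's theorem, so the task reduces to handling three pieces: the exact conditional normality of $W_1$, the negligibility of the normalized $\Delta_1$, and the consistency of $\shsigma_\epsilon$. First I would write
\begin{equation*}
\frac{\sqrt{n}(\sbhat_1-\beta_{01})}{\shsigma_\epsilon\homega_{1,1}^{1/2}}=\left(\frac{W_1}{\sigma_\epsilon\homega_{1,1}^{1/2}}+\frac{\Delta_1}{\sigma_\epsilon\homega_{1,1}^{1/2}}\right)\frac{\sigma_\epsilon}{\shsigma_\epsilon},
\end{equation*}
which isolates these components.

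For the leading term, Theorem \ref{thm1} gives $W_1\mid X\sim N(0,\sigma_\epsilon^2\homega_{1,1})$. Standardizing this Gaussian yields $W_1/(\sigma_\epsilon\homega_{1,1}^{1/2})\mid X\sim N(0,1)$ for almost every $X$; because the resulting conditional law is free of $X$, the same distribution holds unconditionally, so this term is exactly standard normal for each $n$. Consistency of $\shsigma_\epsilon^2$ together with $\sigma_\epsilon>0$ and the continuous mapping theorem gives $\sigma_\epsilon/\shsigma_\epsilon\xrightarrow{P}1$. Hence everything hinges on showing that the middle term is $o_P(1)$.

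This is the only substantive step. Since $\Delta_1=o_P(1)$ by Theorem \ref{thm1} and $\sigma_\epsilon$ is a fixed positive constant, it suffices to bound $\homega_{1,1}$ away from zero in probability, i.e.\ to show $\homega_{1,1}^{-1}=O_P(1)$. Here I would exploit the explicit form of the variance factor: setting $v=(1,-\shgamma_1^T)^T$ so that $\htheta_1=v/\htau_1^2$ and $Xv=X_1-X_{-1}\shgamma_1$, we obtain
\begin{equation*}
\homega_{1,1}=\htheta_1^T\hsigma\htheta_1=\frac{v^T\hsigma v}{\htau_1^4}=\frac{\ttau_1^2}{\htau_1^4},
\end{equation*}
and therefore $\homega_{1,1}^{-1}=\htau_1^4/\ttau_1^2$. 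The factor $1/\ttau_1^2$ is bounded in probability by Lemma \ref{lem1}. For $\htau_1^4$, comparing the node-wise Lasso objective at $\shgamma_1$ with its value at $\gamma=0$ yields $\ttau_1^2\le\|X_1\|^2/n$ and $\lambda_1\|\shgamma_1\|_1\le\|X_1\|^2/(2n)$, so $\htau_1^2=\ttau_1^2+\lambda_1\|\shgamma_1\|_1\lesssim\|X_1\|^2/n\le\phi_{\max}(1)$, which is $O_P(1)$ under Assumption \ref{a4}(i). Consequently $\htau_1^4=O_P(1)$ and $\homega_{1,1}^{-1}=O_P(1)$.

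Combining these facts, $\homega_{1,1}^{-1/2}=O_P(1)$ makes the middle term $\Delta_1/(\sigma_\epsilon\homega_{1,1}^{1/2})=o_P(1)$. The sum inside the parentheses then converges in distribution to $N(0,1)$ by Slutsky's theorem, and multiplying by $\sigma_\epsilon/\shsigma_\epsilon\xrightarrow{P}1$ gives the stated result by a second application of Slutsky's theorem. The main obstacle, as indicated, is the lower bound on $\homega_{1,1}$; everything else is routine, and that bound is precisely where Lemma \ref{lem1} and the eigenvalue control of Assumptions \ref{a3}--\ref{a4} enter.
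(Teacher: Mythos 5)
Your proof is correct, and the overall skeleton (isolate the exactly conditionally Gaussian term, kill the normalized bias, absorb $\shsigma_\epsilon/\sigma_\epsilon$ by Slutsky) matches the paper's; the paper merely replaces the citation of Slutsky's theorem by an explicit $\limsup$/$\liminf$ argument in the style of Javanmard and Montanari. Where you genuinely diverge is in the substantive step. The paper disposes of the studentized bias ``by the same argument as in Theorem 1,'' which implicitly rests on the algebraic cancellation
\begin{equation*}
\frac{\|\hsigma\htheta_1-e_1\|_\infty}{\homega_{1,1}^{1/2}}\;\leq\;\frac{\lambda_1/\htau_1^2}{\ttau_1/\htau_1^2}\;=\;\frac{\lambda_1}{\ttau_1},
\end{equation*}
so that only Lemma \ref{lem1} (the bound on $1/\ttau_1^2$) is ever needed; this is the same cancellation that is made explicit later in the proof of Theorem \ref{thm2}. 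You instead prove the separate statement $\homega_{1,1}^{-1}=\htau_1^4/\ttau_1^2=O_P(1)$, which requires the extra upper bound $\htau_1^2\lesssim \|X_1\|^2/n\leq\phi_{\max}(1)=O_P(1)$ obtained by comparing the node-wise Lasso objective at $\shgamma_1$ with its value at $\gamma=0$ and invoking Assumption \ref{a4}(i). That bound is correct and nowhere stated in the paper, so your route is self-contained but slightly longer; it also yields the mildly stronger by-product that $\homega_{1,1}$ is bounded away from zero in probability, whereas the paper's cancellation is sharper in that it never needs to control $\htau_1^2$ from above at all. Both arguments are valid under the stated assumptions.
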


There are some known consistent estimators for $\sigma_\epsilon^2$. 
For example, we can set $\shsigma_\epsilon^2=\|Y-X\hbeta\|^2/n$ with $\lambda_0\ \asymp \sqrt{\log p/n}$ if $s_0=o(n/\log p).$  
We can also use the scaled Lasso proposed by \cite{sz2012} to obtain a consistent estimate of $\sigma_\epsilon^2$.

Corollary \ref{cor1} states that even if $\sqrt{n}/\log p \lesssim s_0 \ll \sqrt{n/\log p}$ and $\Theta_1$ is not sparse, the studentized debiased Lasso is still valid to perform the $t$ test or construct confidence intervals.
We can confirm that the length of confidence intervals given by the debiased Lasso with $\lambda_1 \asymp 1/\sqrt{n}$ is $O_P(1/\sqrt{n})$ because variances are bounded. \par

Finally, we give a set of simple conditions under which Assumptions 1--4 hold.
\begin{prop}
	\label{prop1}
	Suppose that each row vector of  $X$ is independently drawn from $N(0, \Sigma)$ where $\Sigma$ satisfies $K_{\min}\leq \lambda_{\min}(\Sigma)\leq \lambda_{\max}(\Sigma)\leq K_{\max}$ for some positive constants $K_{\min}$ and $K_{\max}$.
	Moreover, we assume that $s_0 = o(n/\log p)$ and $p \leq C_0n$ for some positive constant $C_0 < \infty$.
 Then, Assumptions 1--4 hold. 
\end{prop}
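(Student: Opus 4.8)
The plan is to verify Assumptions \ref{a1}--\ref{a4} one at a time, using the Gaussian design together with $p \leq C_0 n$. Assumption \ref{a1} is immediate: because each row of $X$ is drawn from $N(0,\Sigma)$ with $\Sigma$ positive definite, the joint law of the $np$ entries of $X$ is absolutely continuous with respect to Lebesgue measure on $\mathbb{R}^{np}$, so the general position property holds almost surely by Lemma 4 of \cite{t2013}.

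The remaining three assumptions are all statements about (restricted) eigenvalues of $\hsigma=X^TX/n$, and the unifying tool is the non-asymptotic singular value bound for Gaussian matrices (Davidson--Szarek). Writing a column subset as $X_S=Z\Sigma_S^{1/2}$, where $Z$ has i.i.d.\ $N(0,1)$ entries and $\Sigma_S$ is the corresponding principal submatrix of $\Sigma$, one has for $|S|=s$
\[
\sqrt{\lambda_{\min}(\Sigma_S)}\bigl(\sqrt{n}-\sqrt{s}-t\bigr)\leq\sigma_{\min}(X_S)\leq\sigma_{\max}(X_S)\leq\sqrt{\lambda_{\max}(\Sigma_S)}\bigl(\sqrt{n}+\sqrt{s}+t\bigr)
\]
with probability at least $1-2e^{-t^2/2}$, and since $\Sigma_S$ is a principal submatrix of $\Sigma$ its eigenvalues lie in $[K_{\min},K_{\max}]$. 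For Assumption \ref{a3} I would use the monotonicity $\phi_{\max}(n)\leq\lambda_{\max}(\hsigma)$ and apply the upper bound to the full matrix ($s=p\leq C_0 n$) with $t=\log n$, giving $\phi_{\max}(n)\leq K_{\max}(1+\sqrt{C_0}+o(1))^2$ with probability tending to one, so any $C_{\max}>K_{\max}(1+\sqrt{C_0})^2$ works. For Assumption \ref{a4}(i), $\phi_{\max}(1)=\max_j\|X_j\|^2/n$ is a maximum of scaled $\chi^2_n$ variables with scale at most $K_{\max}$; a standard chi-squared upper-tail bound gives $P(\|X_j\|^2/n>K_{\max}(1+\delta))\leq e^{-cn}$ for a fixed $\delta>0$, and a union bound over the $p$ columns together with $p\leq C_0 n$ yields $P(\phi_{\max}(1)>C^*)\leq pe^{-cn}=O(1/p)$ with $C^*=K_{\max}(1+\delta)$.

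The crux is Assumption \ref{a2}, which controls the restricted minimum eigenvalue over subsets whose size $n/K$ is a constant fraction of $n$; because the minimum eigenvalue of the full $\hsigma$ is zero when $p>n$, the subset structure cannot be bypassed. By monotonicity of $\phi_{\min}$ in the support size it suffices to treat subsets of size exactly $n/K$. For a fixed such $S$ the lower singular value bound gives $\phi_{\min}(S)\geq K_{\min}(1-1/\sqrt{K}-t/\sqrt{n})^2$ outside an event of probability at most $e^{-t^2/2}$, and a union bound over the $\binom{p}{n/K}\leq(eC_0K)^{n/K}$ subsets produces total failure probability at most $\exp\{n[\log(eC_0K)/K-c^2/2]\}$ when $t=c\sqrt{n}$. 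The hard part is that the subset count grows exponentially in $n$, so the Gaussian deviation must overpower it: this forces the tradeoff $\log(eC_0K)/K<c^2/2$ together with $c<1-1/\sqrt{K}$, the latter being needed to keep the eigenvalue bound strictly positive. Since $\log(eC_0K)/K\to0$ as $K\to\infty$, I would fix $c=1/2$ and then choose $K>4$ large enough (depending only on $C_0$) that both inequalities hold; the failure probability then vanishes and the claim holds with $C_{\min}=K_{\min}(1/2-1/\sqrt{K})^2>0$.

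Finally, for Assumption \ref{a4}(ii) I would invoke Theorem 6 of \cite{rz2013}: for Gaussian designs with $\lambda_{\min}(\Sigma)\geq K_{\min}>0$, the sample restricted eigenvalue $\kappa(s_0,3)^2$ is bounded below with probability tending to one provided $n\gtrsim s_0\log(p/s_0)$. This sample-size condition holds under the hypothesis $s_0=o(n/\log p)$, and hence a fortiori in the regime $s_0=o(\sqrt{n/\log p})$ relevant to Assumption \ref{a4}(ii), since then $s_0\log(p/s_0)\leq s_0\log p=o(n)$, which completes the verification.
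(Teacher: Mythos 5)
Your proof is correct and follows essentially the same route as the paper: Assumption \ref{a1} via Lemma 4 of Tibshirani (2013), Assumption \ref{a4}(ii) via Theorem 6 of Rudelson--Zhou (2013), and Assumptions \ref{a2}, \ref{a3}, \ref{a4}(i) via Gaussian singular-value concentration combined with a union bound over supports, where $p\leq C_0 n$ keeps the $\binom{p}{n/K}$ entropy term subdominant and $K$ is then chosen large enough to make the eigenvalue bound positive. The only differences are cosmetic: you invoke the Davidson--Szarek bound with explicit constants where the paper uses Remark 5.40 of Vershynin (2012) via Javanmard--Montanari, and you handle $\phi_{\max}(n)$ by the full operator norm rather than a subset union bound, which is a slight simplification.
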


Without the condition on $p$, we only have $\phi_{\max}(n) \lesssim \sqrt{\log p}$ and $\phi_{\min}(s_n) \gtrsim 1$ for $s_n=o(n/\log p)$ with high probability (see, for instance, the proof of Lemma 3.5 of \citealt{jm2018}).
The condition on $p$ is used to give a tighter upper bound for $\phi_{\max}(n)$ and lower bound for $\phi_{\min}(n/K)$.

\begin{rem}
	\textup{
		It has not been shown that other inference methods, such as the post-double selection and double machine learning, can have asymptotic normality under similar conditions to ours. 
		Original proof techniques of these methods depend on convergence rates of machine learning estimators, but these rates are unavailable if no sparsity is assumed on $\Theta_1$.}
\end{rem}
\par

\begin{rem}
	\textup{\cite{rszz2015}, \cite{cg2017}, and \cite{jm2018} showed that confidence intervals with length of order $1/\sqrt{n}$ cannot be constructed when $s_0 \gtrsim \sqrt{n}/\log p$ and the population covariance matrix is unknown. 
	However, our results do not contradict their results because they only considered the case where $p>s_0^{\alpha}$ for some $\alpha>2$. 
	If $p \leq Cn$, we have $p/s_0^{\alpha} \leq Cn^{1-\alpha/2} (\log p)^{\alpha/2} \rightarrow 0$ for $s_0$ such that $s_0 \lesssim  \sqrt{n/\log p}$. 
	Therefore, we obtain $p\leq s_0^{\alpha}$ for large $n$ and their condition does not hold. }
\end{rem}

\section{Tuning Parameter Selection Procedure}

From Theorem 1 and Corollary 1, it is better to select a small tuning parameter such that $\lambda_1 \asymp 1/\sqrt{n}$ for statistical inference on $\beta_{01}$ if you are unsure of how sparse $\beta_0$ and $\Theta_1$ are. 
However, our theoretical results do not indicate how to select specific $\lambda_1$ in practice. 
Existing selection procedures may not select a sufficiently small tuning parameter needed to perform accurate statistical inference. 
For example, the cross-validation method is designed to minimize the prediction mean squared error, so it may be unsuitable for statistical inference.
The method of \cite{dbmm2015} selects a slightly smaller value than the cross-validation method, but it is not theoretically justified.
Therefore, we propose a tuning parameter selection procedure for the node-wise Lasso that yields stable confidence intervals.

Now, we investigate the bias term of the studentized debiased Lasso.
Let $\Lambda$ be a set of candidate values of $\lambda_1$.
For each $\lambda \in \Lambda$, we define
\begin{equation*}
    f(\lambda):=\frac{\|\hsigma\htheta_1(\lambda)-e_1\|_\infty}{\homega_{1,1}^{1/2}(\lambda)},
\end{equation*}
where $\htheta_1(\lambda)$ and $\homega_{1,1}(\lambda)$ denote $\htheta_1$ and $\homega_{1,1}$ when $\lambda_1 =\lambda$, respectively.
The $\ell_1$-$\ell_\infty$ H\"{o}lder inequality for the bias term of the studentized debiased Lasso $\tilde{\Delta}_1$ yields 
\begin{equation*}
    |\tilde{\Delta}_1(\lambda)|\leq   f(\lambda)\sqrt{n}\shsigma_\epsilon^{-1}\|\hbeta-\beta_0\|_1. 
\end{equation*}
We can trace the bias factor $f(\lambda)$ for each $\lambda \in \Lambda$ because $\htheta_1$ and $\homega_{1,1}$ only depend on $X$. 
Note that $\sqrt{n}\shsigma_\epsilon^{-1}\|\hbeta-\beta_0\|_1$ is independent of the tuning parameter of the node-wise Lasso. 
If $f(\lambda)$ is small, $\tilde{\Delta}_1(\lambda)$ is also small, so that the distribution of the studentized debiased Lasso is well-approximated by the standard normal distribution. 
However, minimizing $f(\lambda)$ is inappropriate because it is nondecreasing in $\lambda$ (see Proposition 1 of \citealt{zz2014}).  
Too small tuning parameter unnecessarily increases the variance of the debiased Lasso.

To select a suitably small tuning parameter for the node-wise Lasso, we propose the following method, a simple and conservative modification of the method described in Table 2 of \cite{zz2014}.
\begin{enumerate}
    \item Let $\Lambda=\{\lambda_{\min},\ldots,\lambda_{\max}\}$ be a candidate set of $\lambda_1$ with $\lambda_{\min}=\kappa/\sqrt{n}$ for some $\kappa>0$ and $\lambda_{\max}=\max_{k \neq 1}|X_k^TX_1|/n$. Let
    \begin{equation*}
        \eta_1^*=\frac{\ttau_1(\lambda_{\text{CV}})}{\sqrt{n}},
    \end{equation*}
   where $\ttau_1^2(\lambda_{\text{CV}})$ denotes the sample mean of squared residuals from the cross-validated node-wise Lasso.
   
    \item Select $\lambda_1$ with the following rule:
    \begin{enumerate}
        \item If $f(\lambda)>\eta_1^*$ for all $\lambda \in \Lambda$, then set $\lambda_1=\lambda_{\min}$.
        \item Otherwise,
        \begin{align*}
            \lambda^*&=\argmin\{\homega_{1,1}^{1/2}(\lambda): f(\lambda) \leq \eta_1^*\}, \\
            \homega_{1,1}^{*1/2}&=1.1\times\homega_{1,1}^{1/2}(\lambda^*), \\
            \lambda_1&=\argmin\{f(\lambda):\homega_{1,1}^{1/2}(\lambda)\leq \homega_{1,1}^{*1/2}\}.
        \end{align*}
    \end{enumerate}
    \label{node} 
\end{enumerate}
We call this proposed method STPS. 
The idea of STPS is simple. 
It minimizes the bias with a constraint on the upper bound of the variance.
When $f(\lambda) \leq \eta_1^*$ for some $\lambda \in \Lambda$, $\lambda_1$ is also equal to the smallest value in $\{\lambda: \homega_{1,1}^{1/2}(\lambda)\leq \homega_{1,1}^{*1/2}\}$. 
Simulation studies show that $\kappa=0.001$ is an appropriate value to obtain accurate coverage.

The main difference between our method and the method of \cite{zz2014} is that our upper bound of $f(\lambda)$ is much smaller than theirs, which is given by $\eta_1^*=\sqrt{2\log p/n}$. 
Therefore, our method can select a smaller tuning parameter than theirs, and we can show that $f(\lambda_1)=O_P(1/\sqrt{n})$. 
Moreover, STPS does not increase the standard error overly because we can show that  $\homega_{1,1}^{*1/2}$ is bounded. 
Meanwhile, if we select $\lambda_1$ using their method, the standard error may be smaller than ours; however, we obtain $f(\lambda_1)=O_P(\sqrt{\log p/n})$, meaning that the bias may be large and coverage of confidence intervals may be small if $\beta_0$ and $\Theta_1$ are not sufficiently sparse. 
\par 
Finally, we present an asymptotic justification for the proposed procedure.
\begin{thm}\label{thm2}
Suppose that the assumption of Theorem 1 holds, and $\lambda_1$ is selected by STPS. Then, we have
\begin{equation*}
    \frac{\sqrt{n}(\sbhat_1-\beta_{01})}{\shsigma_\epsilon \homega_{1,1}^{1/2}}\xrightarrow{d}N(0,1).
\end{equation*}
\end{thm}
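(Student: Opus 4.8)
The plan is to split the studentized statistic, using \eqref{140}, into a Gaussian leading term and a bias term, and to exploit the fact that the STPS choice of $\lambda_1$ reads only the design. Setting $\tilde{\Delta}_1:=\Delta_1/(\shsigma_\epsilon\homega_{1,1}^{1/2})$, we have
$$\frac{\sqrt{n}(\sbhat_1-\beta_{01})}{\shsigma_\epsilon\homega_{1,1}^{1/2}}=\frac{W_1}{\shsigma_\epsilon\homega_{1,1}^{1/2}}+\tilde{\Delta}_1,\qquad W_1:=\frac{1}{\sqrt{n}}\htheta_1^TX^T\epsilon.$$
The pivotal observation is that $\lambda_1$ is $\sigma(X)$-measurable: the node-wise Lasso regresses $X_1$ on $X_{-1}$, so $\ttau_1(\lambda_{\text{CV}})$, the grid endpoints $\lambda_{\min},\lambda_{\max}$, and the maps $\lambda\mapsto f(\lambda)$ and $\lambda\mapsto\homega_{1,1}(\lambda)$ all depend on $X$ but not on $\epsilon$. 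Hence the selected $\lambda_1$, together with $\htheta_1$ and $\homega_{1,1}$, is a deterministic function of $X$.

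For the leading term I would transfer the conditional normality of Theorem \ref{thm1} without change. Because $\epsilon\sim N(0,\sigma_\epsilon^2 I)$ is independent of $X$ and $\htheta_1$ is $X$-measurable even after selection, $W_1\mid X\sim N(0,\sigma_\epsilon^2\homega_{1,1})$ holds exactly, so $W_1/(\sigma_\epsilon\homega_{1,1}^{1/2})\mid X\sim N(0,1)$ for every realisation of $X$. As this conditional law is free of $X$, it is also the unconditional law; crucially, no uniform control over $\lambda\in\Lambda$ is needed here. Consistency of $\shsigma_\epsilon$ and Slutsky's theorem then give $W_1/(\shsigma_\epsilon\homega_{1,1}^{1/2})\xrightarrow{d}N(0,1)$.

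For the bias term everything reduces to $f(\lambda_1)=O_P(1/\sqrt{n})$. The H\"older bound preceding the statement gives $|\tilde{\Delta}_1|\le f(\lambda_1)\sqrt{n}\,\shsigma_\epsilon^{-1}\|\hbeta-\beta_0\|_1$; since $\shsigma_\epsilon^{-1}=O_P(1)$ and the oracle inequality used in Theorem \ref{thm1} yields $\|\hbeta-\beta_0\|_1=O_P(s_0\sqrt{\log p/n})$, the bound $f(\lambda_1)=O_P(1/\sqrt{n})$ combined with $s_0=o(\sqrt{n/\log p})$ forces $\tilde{\Delta}_1=O_P(s_0\sqrt{\log p/n})=o_P(1)$. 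To establish that bound I would first record the identity $\homega_{1,1}^{1/2}=\ttau_1/\htau_1^2$, which follows from $X\htheta_1=(X_1-X_{-1}\shgamma_1)/\htau_1^2$, and combine it with \eqref{148} to get $f(\lambda)\le\lambda/\ttau_1(\lambda)$ for every $\lambda$. I then treat the two STPS branches. In branch (a), $\lambda_1=\lambda_{\min}=\kappa/\sqrt{n}$, so $f(\lambda_1)\le\kappa/(\sqrt{n}\,\ttau_1(\lambda_{\min}))$, and Lemma \ref{lem1} with $C_1=\kappa$ gives $1/\ttau_1(\lambda_{\min})=O_P(1)$. In branch (b), $\lambda^*$ is feasible for the final minimisation because $\homega_{1,1}^{1/2}(\lambda^*)<\homega_{1,1}^{*1/2}$, so $f(\lambda_1)\le f(\lambda^*)\le\eta_1^*=\ttau_1(\lambda_{\text{CV}})/\sqrt{n}$; as $\ttau_1^2(\lambda)\le\|X_1\|^2/n=O_P(1)$ for every $\lambda$, we obtain $\eta_1^*=O_P(1/\sqrt{n})$. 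Either way $f(\lambda_1)=O_P(1/\sqrt{n})$.

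I expect the main obstacle to be conceptual rather than computational: one must recognise that the data dependence of $\lambda_1$ does not disturb the Gaussian leading term, precisely because STPS reads only $X$, so the exact conditional normality of Theorem \ref{thm1} survives selection with no empirical-process or uniform-convergence argument. The remaining care lies in verifying that Lemma \ref{lem1} applies at the data-driven endpoints $\lambda_{\min}$ and $\lambda_{\text{CV}}$ and that $\ttau_1(\lambda_{\text{CV}})=O_P(1)$, both of which rest on the eigenvalue conditions of Assumptions \ref{a1}--\ref{a3} underlying Lemma \ref{lem1}. The final step is a routine application of Slutsky's theorem, combining the $N(0,1)$ leading term with the $o_P(1)$ bias.
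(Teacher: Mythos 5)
Your proof is correct and follows essentially the same route as the paper's: bound $f(\lambda_1)$ over the two STPS branches via the KKT inequality $f(\lambda)\leq \lambda/\ttau_1(\lambda)$ and the cap $\eta_1^*$, invoke Lemma \ref{lem1} together with the oracle inequality to make the bias $o_P(1)$, and rely on the $X$-measurability of the selected $\lambda_1$ to preserve the exact conditional Gaussianity of the leading term. The only cosmetic difference is in branch (b), where you bound $\ttau_1^2(\lambda_{\text{CV}})\leq \|X_1\|^2/n$ directly from the Lasso objective, while the paper uses the monotonicity of $\ttau_1^2(\cdot)$ and $\shgamma_1(\lambda_{\max})=0$; both are valid.
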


\begin{rem}
	\textup{For our selection method to work well, $\Lambda$ must contain small candidate values.
		However, well-known software packages typically do not generate sufficiently small values as candidates.
		Thus, it is necessary to manually include small values to $\Lambda$.
		An example of the choice of $\Lambda$ is illustrated in the next section.}
\end{rem}

\section{Simulation Studies}
In this section, we compare finite sample performances of the debiased Lasso tuned by our procedure and those of other statistical inference methods. 
We use the R packages \verb|glmnet| and \verb|hdm|. In all examples, we set $p=2n$ and repeat experiments 1,000 times.

\subsection{Simulation Design}
 We employ the settings of \cite{whx2020}. 
 For a given value of $c_y$, we set $\beta_0$ as one of the following $p+1$-dimensional vectors.
\begin{align*}
    \text{sparse}&:(1.5, \underbrace{c_y,\ldots, c_y}_{5},0,\ldots,0)^T, \\
    \text{moderately sparse}&:(1.5, \underbrace{5c_y,\ldots,5c_y}_{10},\underbrace{c_y,\ldots,c_y}_{10},0,\ldots,0)^T, \\
    \text{dense}&: (1.5, c_y, c_y/\sqrt{2},\ldots, c_y/\sqrt{p})^T.
\end{align*}
In addition, for a given value of $c_x$, we set $\gamma_0$ as one of the following $p$-dimensional vectors.
\begin{align*}
    &\text{sparse}:(\underbrace{0,\ldots,0}_{4}, \underbrace{c_x,\ldots,c_x}_{4},0,\ldots,0)^T, \\
    &\text{dense}: (c_x, c_x/\sqrt{2},\ldots, c_x/\sqrt{p})^T.
\end{align*}
The covariance matrix $\Sigma$ is given by one of the following matrices:
\begin{align*}
    &\text{Independent}: \Sigma=I,\ \ \text{Toeplitz}: \Sigma_{jk}=\rho^{|j-k|}, \\
    &\text{Equal correlation}: \Sigma_{jk}=\rho^{1(j \neq k)},\ \ \rho \in \{0.3, 0.9\}.
\end{align*}

We consider the following two settings.
\begin{itemize}
    \item Setting 1. The data generating process is given by
    \begin{equation*}
        Y_i=1+ X_i^T\beta_0 +\epsilon_i \quad \text{for $i=1, \dots, n$} ,
    \end{equation*}
 where  $X_i \in \mathbb{R}^{p+1}  \sim N(0, \Sigma)$, $\epsilon_i \sim N(0,1)$, and $X_i$ and $\epsilon_i$ are independent. 
    We select $c_y$ so that $R^2$ is 0.8.
    
    \item Setting 2. The data generating process is given by
    \begin{align*}
        Y_i &=1 + X_i^T \beta_0 +\epsilon_i, \\
        X_{i,1}&=0.5+X_{i,-1}^T\gamma_0+\eta_i,
    \end{align*}
    where $X_{i,-1} \in \mathbb{R}^{p} \sim N(0, \Sigma),$ $(\epsilon_i,\eta_i)\sim N(0, I_2),$ and $X_i=(X_{i,1}, X_{i,-1}^T)^T$ and $(\epsilon_i, \eta_i)$ are independent. 
    We select $c_y$ so that $R^2$ of the first equation is 0.8 and select $c_x$ so that $R^2$ of the second equation is one of \{0.3, 0.5, 0.8, 0.9\}.
\end{itemize}

The true value of the parameter of interest, the first element of $\beta_0$, is  1.5. 
In Setting 1, some calculations yield
\begin{equation*}
    \tau_1^2=\begin{cases}
    1 & \text{if} \ \ \  \Sigma_{jk}=1(j=k), \\
    0.701 & \text{if} \ \ \  \Sigma_{jk}=0.3^{1(j \neq k)}, \\
    0.19 & \text{if}\ \ \ \Sigma_{jk}=0.9^{|j-k|}, \\
    0.1 & \text{if}\ \ \  \Sigma_{jk}=0.9^{1(j \neq k)}.
    \end{cases}
\end{equation*}
In Setting 2, we have $\tau_1^2=1$.
When the covariance matrix has the structure of the Toeplitz or equal correlation, $\Theta_1$ is dense for Setting 1. 
In Setting 2, $\gamma_0$ controls the sparsity of $\Theta_1$ because we have $\Theta_1=(1,-\gamma_0^T)^T$.

The following procedures are included in the comparisons.
\begin{itemize}
	\item ``Oracle" refers to OLS derived from true models. 
    \item ``STPS" refers to the debiased Lasso with $\lambda_1$ selected by STPS.
	\item ``CV" refers to the debiased Lasso with $\lambda_1$ selected by 10-fold cross-validation. 
	\item``Univ" refers to the debiased Lasso with $\lambda_1=\tau_1\sqrt{2 \log p/n}$, the universal tuning parameter.
	\item ``DBMM" refers to the debiased Lasso with $\lambda_1$ selected by the method of \cite{dbmm2015}. 
	\item ``ZZ" represents the debiased Lasso with $\lambda_1$ selected by the method of \cite{zz2014}. 
	\item ``Double" refers to the post-double selection of \cite{bch2014}.
    \item ``PODS" refers to the projection onto the double selection of \cite{whx2020}.
	\item ``DML" refers to the double machine learning of \cite{ccddhnr2018} with 5-fold sample splitting. 
\end{itemize}
We compared the biases, standard deviations, coverages, and lengths of 95\% confidence intervals for all methods.

To implement the debiased Lasso, we need to determine two tuning parameters.
For all debiased Lasso-based methods, including STPS, the tuning parameter of the Lasso is determined by the 10-fold cross-validation, which is implemented using the \verb|lambda.min| option of \verb|glmnet|.
The tuning parameter of the node-wise Lasso is selected from a common set $\Lambda$.
As we note in the previous section, the default set generated by \verb|glmnet| does not contain sufficiently small values.
Thus, we augment the default set by adding some small values.
Specifically, we set $\Lambda=\Lambda_1 \cup \Lambda_2$ where $\Lambda_1=\{0.001/\sqrt{n}, 0.002/\sqrt{n},\ldots, 0.02/\sqrt{n}\}$ consists of 20 elements, and $\Lambda_2$ is the set of 100 elements generated by $\verb|glmnet|$.
Note that we do not multiply $\lambda_0$ and $\lambda_1$ by 2 in our simulation studies.
 We estimate the error variance $\sigma_\epsilon^2=1$ by $\shsigma_\epsilon^2=\|Y-X\shbeta(\lambda_{\text{1se}})\|^2/n$, where $\shbeta(\lambda_{\text{1se}})$ is the Lasso whose tuning parameter is selected by the one standard error rule with \verb|lambda.1se| option.

We briefly explain other estimation methods. 
Double is implemented by the function \verb|rlassoEffects| of the R package \verb|hdm|. In PODS, we use the Lasso implemented by the function \verb|rlasso| of the R package \verb|hdm| as the model selection procedure.
To implement DML, we follow Definition 3.2 of \cite{ccddhnr2018} and use the variance estimator defined in Theorem 3.2 to construct confidence intervals. 
The orthogonal score function $\psi$ is defined as follows: 
\begin{equation*}
\psi(W_i,\eta_0,\beta_{01})=(Y_i-X_{i,1}\beta_{01}-g_0(X_{i,-1}))(X_{i,1}-m_0(X_{i,-1})),
\end{equation*}
where $\eta_0=(g_0,m_0)$ is a nuisance parameter and $W_i=(X_i,Y_i)$. 
We estimate $g_0$ and $m_0$ by the post-Lasso, which is implemented by the function \verb|rlasso|. 
Tuning parameters for the post-Lasso are selected using the method of \cite{bcch2012}, which is also implemented by \verb|rlasso|.

\subsection{Simulation Results}

Table \ref{357} summarizes the results for Setting 1 with $n=100$ and $p=200$. 
In all cases, the coverage rates of STPS are close to the nominal level. 
On the other hand, the debiased Lasso-based confidence intervals have smaller coverage rates if $\lambda_1$ is selected by other methods. 
This is because they select relatively larger tuning parameters so that the debiased Lasso tends to have a large bias and small variance.
In particular, the universal penalty, which is often used for theoretical analysis of the Lasso, may not be suitable for the node-wise Lasso. 
CV also yields low coverage, although it is the most well-known method for tuning parameter selection. 
Double and PODS are also unstable, and their coverages are generally low.  
If the correlation among covariates is not too high, DML performs well and its standard deviations are smaller than those of STPS. 
However, when the correlation among covariates is the highest, the coverage of DML is much higher than the nominal level. \par

Table \ref{413} shows the results for Setting 1 when $n=500$ and $p=1000$. 
STPS gives confidence intervals with accurate coverage.
Even in this case, Univ still has large biases, and its coverage is small. 
Although the results of CV, DBMM, and ZZ are improved compared with the small sample size case, their coverage is still small, particularly when $\beta_0$ is dense and $\Sigma$ has the Toeplitz structure. 
DML performs well, but its coverage is still large when the correlation is the highest. 
The performances of Double and PODS are unstable, even in the large sample case. \par

Next, we compare the results of all procedures for Setting 2. 
Table \ref{497} shows the simulation results for this setting when $n=100$ and $p=200$. 
Only the coverage rates of STPS are close to the nominal coverage in all cases. 
Other debiased Lasso-based methods do not cover $\beta_{01}$ well in this setting. 
In particular, when $\beta_0$ and $\gamma_0$ are dense, they have large biases. 
These results are consistent with our theoretical analysis. 
Because the sparsity assumptions of \cite{vbrd2014} and \cite{jm2018} are violated when both $\beta_0$ and $\gamma_0$ are dense, existing selection methods cannot remove the bias.
Although DML performs well when either $\beta_0$ or $\gamma_0$ is not dense, its performance is significantly aggravated when both of them are dense. 
The theory of DML allows that one nuisance parameter is dense to make the bias small if the other nuisance parameter is quite sparse. However, because both nuisance parameters are dense, a large bias occurs for DML. \par

Table \ref{498} shows simulation results for Setting 2 when $n=500$ and $p=1,000$. 
When $\beta_0$ and $\gamma_0$ are dense, the biases for all methods other than STPS are so large that confidence intervals yield much small coverage. 
This is because the other methods miss more relevant covariates than the case of $n=100$. \par

We also performed simulation for $p > 2n$.  The results are similar to the case of $p=2n$ and are not reported here.
We confirmed that STPS outperforms other methods in terms of coverage for $p \le 10 n$. 
However, when $p=10n$, the bias of the debiased Lasso cannot be sufficiently removed, even by selecting the smallest value in $\Lambda$. 
Therefore, the coverage of STPS also tends to be smaller than the nominal level.\par

In summary, provided $p$ is not significantly greater than $n$, STPS gives accurate confidence intervals regardless of whether $\beta_0$ and $\Theta_1$ are sparse or dense. 
Meanwhile, other methods perform poorly, particularly when $\beta_0$ and $\Theta_1$ are dense. 
Our simulation results indicate that the debiased Lasso tuned by our method would enable us to perform accurate statistical inference in general settings when $p$ is reasonably greater than $n$. \par 
    
\begin{table}
\vspace{-2em}
\centering
\caption{Simulation results for Setting 1 with $n=100$ and $p=200$.}
{\small
\begin{tabular}{cccccccccc}
\hline
       & Oracle & STPS   & Univ   & CV     & DBMM   & ZZ     & Double & PODS   & DML    \\ \hline
       & \multicolumn{9}{c}{$\beta_0$ is sparse, $\Sigma=I$}                                \\ \cline{2-10} 
Bias   & 0.001  & -0.039 & -0.052 & -0.051 & -0.045 & -0.043 & -0.162 & -0.006 & -0.003 \\
SD     & 0.103  & 0.165  & 0.121  & 0.122  & 0.13   & 0.139  & 0.161  & 0.163  & 0.127  \\
Cover  & 0.932  & 0.94   & 0.875  & 0.886  & 0.909  & 0.919  & 0.757  & 0.831  & 0.938  \\
Length & 0.392  & 0.671  & 0.427  & 0.435  & 0.485  & 0.532  & 0.545  & 0.442  & 0.462  \\ \cline{2-10} 
       & \multicolumn{9}{c}{$\beta_0$ is sparse, $\Sigma_{j,k}=0.3^{1(j\neq k)}$}                              \\ \cline{2-10} 
Bias   & -0.006 & -0.026 & -0.046 & -0.042 & -0.038 & -0.036 & -0.041 & 0.019  & 0.006  \\
SD     & 0.111  & 0.192  & 0.133  & 0.139  & 0.145  & 0.145  & 0.159  & 0.213  & 0.153  \\
Cover  & 0.949  & 0.944  & 0.867  & 0.899  & 0.921  & 0.919  & 0.904  & 0.775  & 0.95   \\
Length & 0.434  & 0.757  & 0.424  & 0.482  & 0.537  & 0.542  & 0.569  & 0.511  & 0.597  \\ \cline{2-10} 
       & \multicolumn{9}{c}{$\beta_0$ is sparse, $\Sigma_{j,k}=0.9^{1(j\neq k)}$}                              \\ \cline{2-10} 
Bias   & 0.002  & -0.067 & -0.311 & -0.114 & -0.087 & -0.218 & 0.331  & 0.014  & 0.14   \\
SD     & 0.291  & 0.489  & 0.325  & 0.347  & 0.357  & 0.332  & 0.367  & 0.566  & 0.387  \\
Cover  & 0.944  & 0.935  & 0.509  & 0.886  & 0.917  & 0.739  & 0.848  & 0.732  & 0.984  \\
Length & 1.109  & 1.841  & 0.652  & 1.201  & 1.309  & 0.894  & 1.425  & 1.28   & 2.037  \\ \cline{2-10} 
       & \multicolumn{9}{c}{$\beta_0$ is sparse, $\Sigma_{j,k}=0.9^{|j-k|}$}                               \\ \cline{2-10} 
Bias   & 0.001  & -0.011 & 0.014  & 0.009  & 0.004  & 0.005  & -0.064 & -0.004 & -0.001 \\
SD     & 0.24   & 0.384  & 0.224  & 0.234  & 0.241  & 0.232  & 0.267  & 0.284  & 0.25   \\
Cover  & 0.934  & 0.945  & 0.826  & 0.886  & 0.913  & 0.901  & 0.908  & 0.905  & 0.932  \\
Length & 0.9    & 1.525  & 0.601  & 0.734  & 0.814  & 0.749  & 0.953  & 0.932  & 0.931  \\ \cline{2-10} 
       & \multicolumn{9}{c}{$\beta_0$ is moderately sparse, $\Sigma=I$}                     \\ \cline{2-10} 
Bias   & 0      & -0.066 & -0.087 & -0.086 & -0.077 & -0.073 & -0.265 & -0.009 & -0.002 \\
SD     & 0.111  & 0.179  & 0.147  & 0.148  & 0.152  & 0.158  & 0.186  & 0.183  & 0.173  \\
Cover  & 0.92   & 0.909  & 0.79   & 0.797  & 0.833  & 0.86   & 0.622  & 0.834  & 0.926  \\
Length & 0.392  & 0.7    & 0.446  & 0.454  & 0.506  & 0.555  & 0.653  & 0.496  & 0.632  \\ \cline{2-10} 
       & \multicolumn{9}{c}{$\beta_0$ is moderately sparse, $\Sigma_{j,k}=0.9^{1(j\neq k)}$}                   \\ \cline{2-10} 
Bias   & 0.002  & -0.066 & -0.31  & -0.113 & -0.085 & -0.217 & 0.333  & 0.014  & 0.138  \\
SD     & 0.348  & 0.488  & 0.325  & 0.346  & 0.356  & 0.331  & 0.369  & 0.565  & 0.386  \\
Cover  & 0.908  & 0.938  & 0.51   & 0.885  & 0.92   & 0.743  & 0.845  & 0.732  & 0.986  \\
Length & 1.203  & 1.837  & 0.651  & 1.199  & 1.306  & 0.893  & 1.424  & 1.277  & 2.028  \\ \cline{2-10} 
       & \multicolumn{9}{c}{$\beta_0$ is moderately sparse, $\Sigma_{j,k}=0.9^{|j-k|}$}                    \\ \cline{2-10} 
Bias   & 0.003  & -0.015 & -0.008 & -0.009 & -0.012 & -0.013 & -0.077 & -0.003 & 0.002  \\
SD     & 0.265  & 0.383  & 0.214  & 0.227  & 0.236  & 0.226  & 0.267  & 0.288  & 0.253  \\
Cover  & 0.913  & 0.949  & 0.835  & 0.893  & 0.911  & 0.898  & 0.899  & 0.904  & 0.934  \\
Length & 0.899  & 1.515  & 0.597  & 0.729  & 0.809  & 0.744  & 0.951  & 0.933  & 0.951  \\ \cline{2-10} 
       & \multicolumn{9}{c}{$\beta_0$ is dense, $\Sigma_{j,k}=0.9^{1(j\neq k)}$}                               \\ \cline{2-10} 
Bias   & -  & -0.065 & -0.31  & -0.112 & -0.085 & -0.216 & 0.334  & 0.014  & 0.138  \\
SD     & -  & 0.488  & 0.325  & 0.346  & 0.357  & 0.331  & 0.369  & 0.567  & 0.387  \\
Cover  & -  & 0.939  & 0.516  & 0.891  & 0.92   & 0.74   & 0.85   & 0.728  & 0.985  \\
Length & -  & 1.836  & 0.65   & 1.198  & 1.306  & 0.892  & 1.423  & 1.278  & 2.028  \\ \cline{2-10} 
       & \multicolumn{9}{c}{$\beta_0$ is dense, $\Sigma_{j,k}=0.9^{|j-k|}$}                                \\ \cline{2-10} 
Bias   & -  & -0.021 & -0.037 & -0.032 & -0.03  & -0.034 & -0.152 & -0.013 & 0.002  \\
SD     & -  & 0.381  & 0.225  & 0.235  & 0.242  & 0.234  & 0.264  & 0.297  & 0.289  \\
Cover  & -  & 0.939  & 0.785  & 0.842  & 0.876  & 0.862  & 0.865  & 0.873  & 0.944  \\
Length & -  & 1.459  & 0.575  & 0.702  & 0.779  & 0.716  & 0.955  & 0.931  & 1.09   \\ \hline
\end{tabular}
}
\label{357}
\end{table}
\begin{table}
\vspace{-2em}
\centering
\caption{Simulation results for Setting 1 with $n=500$ and $p=1000$.}
{\small 
\begin{tabular}{cccccccccc}
\hline
       & Oracle & STPS   & Univ   & CV     & DBMM   & ZZ     & Double & PODS   & DML    \\ \hline
       & \multicolumn{9}{c}{$\beta_0$ is sparse, $\Sigma=I$}                                         \\ \cline{2-10} 
Bias   & -0.002 & -0.005 & -0.008 & -0.008 & -0.006 & -0.006 & -0.1   & -0.001 & -0.002 \\
SD     & 0.046  & 0.076  & 0.048  & 0.048  & 0.052  & 0.057  & 0.062  & 0.061  & 0.046  \\
Cover  & 0.94   & 0.964  & 0.945  & 0.943  & 0.957  & 0.953  & 0.536  & 0.862  & 0.942  \\
Length & 0.176  & 0.314  & 0.185  & 0.185  & 0.207  & 0.23   & 0.211  & 0.185  & 0.176  \\ \cline{2-10} 
       & \multicolumn{9}{c}{$\beta_0$ is sparse, $\Sigma_{j,k}=0.3^{1(j\neq k)}$}                              \\ \cline{2-10} 
Bias   & 0.001  & -0.004 & -0.01  & -0.008 & -0.006 & -0.006 & 0.03   & 0.02   & 0.021  \\
SD     & 0.049  & 0.096  & 0.054  & 0.055  & 0.059  & 0.063  & 0.062  & 0.078  & 0.057  \\
Cover  & 0.959  & 0.954  & 0.923  & 0.945  & 0.95   & 0.953  & 0.902  & 0.81   & 0.961  \\
Length & 0.195  & 0.379  & 0.196  & 0.211  & 0.235  & 0.248  & 0.229  & 0.214  & 0.242  \\ \cline{2-10} 
       & \multicolumn{9}{c}{$\beta_0$ is sparse, $\Sigma_{j,k}=0.9^{1(j\neq k)}$}                              \\ \cline{2-10} 
Bias   & 0.002  & -0.008 & -0.102 & -0.023 & -0.017 & -0.044 & 0.242  & 0.027  & 0.037  \\
SD     & 0.127  & 0.224  & 0.141  & 0.144  & 0.144  & 0.143  & 0.152  & 0.22   & 0.147  \\
Cover  & 0.95   & 0.953  & 0.721  & 0.938  & 0.947  & 0.908  & 0.636  & 0.786  & 0.975  \\
Length & 0.497  & 0.893  & 0.384  & 0.544  & 0.556  & 0.5    & 0.589  & 0.555  & 0.667  \\ \cline{2-10} 
       & \multicolumn{9}{c}{$\beta_0$ is sparse, $\Sigma_{j,k}=0.9^{|j-k|}$}                                \\ \cline{2-10} 
Bias   & 0      & 0.007  & 0.013  & 0.009  & 0.004  & 0.003  & -0.06  & -0.001 & 0      \\
SD     & 0.108  & 0.18   & 0.106  & 0.106  & 0.108  & 0.109  & 0.115  & 0.118  & 0.109  \\
Cover  & 0.941  & 0.958  & 0.85   & 0.89   & 0.92   & 0.925  & 0.892  & 0.924  & 0.941  \\
Length & 0.402  & 0.707  & 0.314  & 0.341  & 0.38   & 0.392  & 0.419  & 0.409  & 0.404  \\ \cline{2-10} 
       & \multicolumn{9}{c}{$\beta_0$ is moderately sparse, $\Sigma=I$}                      \\ \cline{2-10} 
Bias   & -0.002 & -0.008 & -0.014 & -0.014 & -0.011 & -0.01  & -0.106 & 0      & -0.002 \\
SD     & 0.047  & 0.077  & 0.05   & 0.05   & 0.054  & 0.058  & 0.063  & 0.063  & 0.048  \\
Cover  & 0.938  & 0.963  & 0.928  & 0.926  & 0.945  & 0.956  & 0.532  & 0.866  & 0.952  \\
Length & 0.176  & 0.323  & 0.19   & 0.19   & 0.212  & 0.236  & 0.22   & 0.19   & 0.185  \\ \cline{2-10} 
       & \multicolumn{9}{c}{$\beta_0$ is moderately sparse, $\Sigma_{j,k}=0.9^{1(j\neq k)}$}                   \\ \cline{2-10} 
Bias   & 0.003  & -0.008 & -0.101 & -0.022 & -0.017 & -0.043 & 0.242  & 0.026  & 0.037  \\
SD     & 0.138  & 0.224  & 0.14   & 0.144  & 0.144  & 0.143  & 0.151  & 0.22   & 0.147  \\
Cover  & 0.947  & 0.953  & 0.723  & 0.938  & 0.945  & 0.907  & 0.632  & 0.777  & 0.978  \\
Length & 0.541  & 0.891  & 0.383  & 0.543  & 0.555  & 0.499  & 0.587  & 0.554  & 0.665  \\ \cline{2-10} 
       & \multicolumn{9}{c}{$\beta_0$ is moderately sparse, $\Sigma_{j,k}=0.9^{|j-k|}$}                    \\ \cline{2-10} 
Bias   & 0      & 0.006  & 0.007  & 0.004  & 0      & 0      & -0.064 & -0.002 & 0      \\
SD     & 0.109  & 0.18   & 0.102  & 0.103  & 0.107  & 0.108  & 0.115  & 0.119  & 0.109  \\
Cover  & 0.94   & 0.959  & 0.871  & 0.896  & 0.924  & 0.928  & 0.892  & 0.919  & 0.943  \\
Length & 0.402  & 0.706  & 0.313  & 0.341  & 0.38   & 0.392  & 0.419  & 0.41   & 0.407  \\ \cline{2-10} 
       & \multicolumn{9}{c}{$\beta_0$ is dense, $\Sigma_{j,k}=0.9^{1(j\neq k)}$}                               \\ \cline{2-10} 
Bias   & -  & -0.008 & -0.101 & -0.022 & -0.016 & -0.043 & 0.241  & 0.024  & 0.038  \\
SD     & -  & 0.224  & 0.14   & 0.143  & 0.143  & 0.142  & 0.151  & 0.221  & 0.146  \\
Cover  & -  & 0.95   & 0.72   & 0.937  & 0.947  & 0.912  & 0.639  & 0.785  & 0.976  \\
Length & -  & 0.89   & 0.382  & 0.542  & 0.554  & 0.499  & 0.587  & 0.554  & 0.664  \\ \cline{2-10} 
       & \multicolumn{9}{c}{$\beta_0$ is dense, $\Sigma_{j,k}=0.9^{|j-k|}$}                                \\ \cline{2-10} 
Bias   & - & 0.004  & -0.009 & -0.01  & -0.01  & -0.009 & -0.16  & -0.003 & -0.002 \\
SD     & -  & 0.179  & 0.108  & 0.108  & 0.11   & 0.111  & 0.115  & 0.124  & 0.128  \\
Cover  & -   & 0.943  & 0.822  & 0.863  & 0.899  & 0.904  & 0.661  & 0.903  & 0.944  \\
Length & -  & 0.67   & 0.297  & 0.324  & 0.361  & 0.372  & 0.421  & 0.409  & 0.481  \\ \hline
\end{tabular}
}
\label{413}
\end{table}

\begin{table}
\centering
\caption{Simulation results for Setting 2 with $n=100$ and $p=200$.}
{
\begin{tabular}{cccccccccc}
\midrule
       & Oracle & STPS   & Univ   & CV     & DBMM   & ZZ     & Double & PODS   & DML   \\ \midrule
       & \multicolumn{9}{c}{$\beta_0$ and $\gamma_0$ are sparse, $R^2=0.5$}                         \\ \cmidrule{2-10} 
Bias   & -0.002 & -0.014 & -0.003 & -0.007 & -0.009 & -0.009 & -0.043 & -0.003 & 0     \\
SD     & 0.09   & 0.191  & 0.101  & 0.105  & 0.109  & 0.109  & 0.117  & 0.122  & 0.108 \\
Cover  & 0.931  & 0.962  & 0.89   & 0.913  & 0.936  & 0.933  & 0.907  & 0.908  & 0.938 \\
Length & 0.341  & 0.754  & 0.33   & 0.368  & 0.41   & 0.414  & 0.439  & 0.416  & 0.415 \\ \cmidrule{2-10} 
       & \multicolumn{9}{c}{$\beta_0$ and $\gamma_0$ are sparse, $R^2=0.9$}                         \\ \cmidrule{2-10} 
Bias   & -0.003 & -0.024 & -0.111 & -0.084 & -0.071 & -0.085 & -0.007 & 0.007  & 0.029 \\
SD     & 0.053  & 0.175  & 0.08   & 0.087  & 0.091  & 0.084  & 0.125  & 0.122  & 0.128 \\
Cover  & 0.935  & 0.964  & 0.534  & 0.734  & 0.823  & 0.75   & 0.937  & 0.903  & 0.98  \\
Length & 0.198  & 0.709  & 0.22   & 0.284  & 0.315  & 0.279  & 0.484  & 0.415  & 0.664 \\ \cmidrule{2-10} 
       & \multicolumn{9}{c}{$\beta_0$ is moderately sparse and $\gamma_0$ is dense, $R^2=0.3$}          \\ \cmidrule{2-10} 
Bias   & 0.003  & -0.013 & 0.019  & 0.006  & -0.002 & -0.003 & -0.061 & -0.014 & 0.008 \\
SD     & 0.101  & 0.192  & 0.102  & 0.109  & 0.115  & 0.112  & 0.125  & 0.134  & 0.11  \\
Cover  & 0.921  & 0.948  & 0.893  & 0.928  & 0.937  & 0.937  & 0.901  & 0.889  & 0.922 \\
Length & 0.359  & 0.744  & 0.342  & 0.39   & 0.434  & 0.427  & 0.466  & 0.437  & 0.398 \\ \cmidrule{2-10} 
       & \multicolumn{9}{c}{$\beta_0$ is moderately sparse and $\gamma_0$ is dense, $R^2=0.8$}          \\ \cmidrule{2-10} 
Bias   & 0.003  & -0.005 & 0.006  & 0.003  & 0.002  & 0.004  & -0.007 & -0.007 & 0.007 \\
SD     & 0.067  & 0.187  & 0.074  & 0.099  & 0.106  & 0.08   & 0.149  & 0.172  & 0.092 \\
Cover  & 0.921  & 0.962  & 0.867  & 0.931  & 0.94   & 0.898  & 0.923  & 0.827  & 0.937 \\
Length & 0.234  & 0.748  & 0.214  & 0.367  & 0.406  & 0.264  & 0.551  & 0.473  & 0.329 \\ \cmidrule{2-10} 
       & \multicolumn{9}{c}{$\beta_0$ and $\gamma_0$ are dense, $R^2=0.8$}                           \\ \cmidrule{2-10} 
Bias   & -      & 0.002  & 0.113  & 0.05   & 0.038  & 0.087  & -0.032 & -0.014 & 0.122 \\
SD     & -      & 0.187  & 0.097  & 0.106  & 0.111  & 0.096  & 0.149  & 0.173  & 0.092 \\
Cover  & -      & 0.967  & 0.41   & 0.877  & 0.917  & 0.66   & 0.911  & 0.819  & 0.657 \\
Length & -      & 0.773  & 0.221  & 0.379  & 0.42   & 0.273  & 0.547  & 0.472  & 0.338 \\ \midrule
\end{tabular}
}
\label{497}
\end{table}

\begin{table}
\centering
\caption{Simulation results for Setting 2 with $n=500$ and $p=1000$.}
{
\begin{tabular}{cccccccccc}
\midrule
       & Oracle & STPS   & Univ   & CV     & DBMM   & ZZ     & Double & PODS   & DML    \\ \midrule
       & \multicolumn{9}{c}{$\beta_0$ and $\gamma_0$ are sparse, $R^2=0.5$}                          \\ \cmidrule{2-10} 
Bias   & -0.001 & 0.001  & 0.002  & 0      & -0.001 & -0.001 & -0.026 & 0      & -0.002 \\
SD     & 0.04   & 0.085  & 0.045  & 0.045  & 0.047  & 0.049  & 0.051  & 0.052  & 0.047  \\
Cover  & 0.942  & 0.961  & 0.918  & 0.93   & 0.95   & 0.956  & 0.88   & 0.911  & 0.94   \\
Length & 0.152  & 0.34   & 0.156  & 0.164  & 0.183  & 0.196  & 0.187  & 0.179  & 0.177  \\ \cmidrule{2-10} 
       & \multicolumn{9}{c}{$\beta_0$ and $\gamma_0$ are sparse, $R^2=0.9$}                          \\ \cmidrule{2-10} 
Bias   & 0      & -0.002 & -0.046 & -0.035 & -0.025 & -0.025 & -0.024 & 0      & 0      \\
SD     & 0.023  & 0.083  & 0.037  & 0.039  & 0.042  & 0.041  & 0.051  & 0.053  & 0.047  \\
Cover  & 0.952  & 0.96   & 0.682  & 0.798  & 0.871  & 0.871  & 0.897  & 0.914  & 0.956  \\
Length & 0.089  & 0.341  & 0.123  & 0.138  & 0.154  & 0.154  & 0.191  & 0.179  & 0.189  \\ \cmidrule{2-10} 
       & \multicolumn{9}{c}{$\beta_0$ is moderately sparse and $\gamma_0$ is dense, $R^2=0.3$}        \\ \cmidrule{2-10} 
Bias   & -0.002 & 0.001  & 0.012  & 0.004  & 0.001  & 0.001  & -0.029 & -0.005 & -0.001 \\
SD     & 0.042  & 0.089  & 0.044  & 0.046  & 0.048  & 0.049  & 0.053  & 0.056  & 0.044  \\
Cover  & 0.933  & 0.959  & 0.91   & 0.936  & 0.952  & 0.953  & 0.891  & 0.91   & 0.942  \\
Length & 0.156  & 0.352  & 0.154  & 0.169  & 0.189  & 0.191  & 0.199  & 0.189  & 0.165  \\ \cmidrule{2-10} 
       & \multicolumn{9}{c}{$\beta_0$ is moderately sparse and $\gamma_0$ is dense, $R^2=0.8$}        \\ \cmidrule{2-10} 
Bias   & -0.001 & 0.002  & 0.006  & 0.002  & 0.002  & 0.004  & -0.011 & -0.003 & 0      \\
SD     & 0.025  & 0.093  & 0.029  & 0.04   & 0.043  & 0.032  & 0.061  & 0.067  & 0.037  \\
Cover  & 0.944  & 0.95   & 0.898  & 0.95   & 0.953  & 0.924  & 0.942  & 0.877  & 0.937  \\
Length & 0.095  & 0.37   & 0.097  & 0.156  & 0.173  & 0.116  & 0.231  & 0.209  & 0.134  \\ \cmidrule{2-10} 
       & \multicolumn{9}{c}{$\beta_0$ and $\gamma_0$ are dense, $R^2=0.8$}                            \\ \cmidrule{2-10} 
Bias   & -  & 0.004  & 0.16   & 0.068  & 0.049  & 0.123  & -0.035 & -0.012 & 0.144  \\
SD     & -  & 0.093  & 0.041  & 0.043  & 0.045  & 0.04   & 0.062  & 0.067  & 0.038  \\
Cover  & -      & 0.954  & 0.015  & 0.597  & 0.813  & 0.068  & 0.893  & 0.868  & 0.029  \\
Length & -   & 0.382  & 0.1    & 0.161  & 0.179  & 0.12   & 0.229  & 0.209  & 0.14   \\ \midrule
\end{tabular}

}
\label{498}
\end{table}

\section{Real Data Example}
In this section, we present an example of applying our method to real economic data.
We revisit the study of \cite{hr1978}, who investigated methodological problems in using housing data to estimate the demand for clean air. They used data for census tracts in the Boston Standard Metropolitan Statistical Area.
We examine the effect of nitrogen oxide concentration on housing prices by adding new control variables to the original regression model.

\cite{hr1978} considered the following linear regression model:
\begin{align*}
\log\text{MV}_i&= \alpha + \beta_1\text{NOX}_i^2 +\beta_2\text{RM}_i^2+\beta_3\log\text{DIS}_i+\beta_4\text{AGE}_i+\beta_5\log\text{RAD}_i+\beta_6\text{TAX}_i\\
&+\beta_7\text{PTRATIO}_i+\beta_8(B_i-0.63)^2+\beta_9\log\text{LSTAT}_i+\beta_{10}\text{CRIM}_i\\
&+\beta_{11}\text{ZN}_i+\beta_{12}\text{INDUS}_i +\beta_{13}\text{CHAS}_i +\epsilon_i,
\end{align*}
where MV denotes the median value of houses and NOX denotes the nitrogen oxide concentration in each census tract\footnote{In \cite{hr1978}, the exponent 2 of NOX is an estimated value rather than a predetermined value.}. 
See \cite{gp1996} for explanations of other covariates.
\cite{hr1978} showed that the OLS estimate of the coefficient of ${\rm NOX}^2$ has a negative sign, being highly significant.
Because the original data of \cite{hr1978} contained some incorrectly coded observations, \cite{gp1996} performed sensitivity analysis based on corrected data and showed that the conclusions of the empirical study do not change. \par

We performed sensitivity analysis by adding new control variables. 
We considered the following linear regression model:
\begin{equation}
\log\text{MV}_i =\alpha+ \beta_1 \text{NOX}_i^2 +X_{i,-1}^T \beta_{-1}+\epsilon_i, \label{443}
\end{equation}
where $X_{i,-1}$ denotes the vector of 78 control variables, including all covariates except for $\text{NOX}^2$ in the original model and their first order interaction terms. 
We used the data of \cite{gp1996}, which is available from the R package \verb|mlbench|.
The sample size was 506.
We performed statistical inference on $\beta_1$ using methods listed in our simulation studies. 
Because we could obtain the OLS estimate of $\beta_1$, we also compared the results of methods with OLS estimates in the original model and the model \eqref{443}. 
To obtain the estimator of DML, we employed the median method using 100 sample splits (see Definition 3.3 of \citealt{ccddhnr2018}). Because we do not know the true error variance in regressing $\text{NOX}^2_i$ on $X_{i,-1}$ for Univ, we estimate it by the sample mean of squared residuals from the node-wise Lasso tuned by the one standard error rule. \par

Table \ref{460} shows the results of applying methods to the model \eqref{443}.
The results of all methods are consistent with those of previous studies: all methods rejected the null hypothesis $\beta_1 =0$ at a 5\% significance level. 
Because the high-dimensional OLS estimate does not differ from the original OLS estimate, it is plausible to think that control variables in the original model are sufficient. 
The estimates of the debiased Lasso tuned by cross-validation, methods of Zhang and Zhang (2014), Dezeure et al. (2015), and STPS, and the estimate of PODS are close to the OLS estimates. 
Meanwhile, other estimates are relatively different from the two OLS estimates.  
The estimates of Univ and DML are relatively large, whereas the estimate of Double is small. 
Because the simulation studies show that STPS always has small biases when $n$ is large, estimates that significantly differ from that of STPS may have large biases in this real data example. 

\begin{table}[t]
\centering
\caption{Estimates of $\beta_1$ and their standard errors for the model \eqref{443} with $n=506$ and $\dim(X_{i,-1})=78$. OLS refers to the OLS results for $\beta_1$ in the original model, which are given in \cite{gp1996}. HOLS refers to the OLS results for $\beta_{1}$ in the model \eqref{443}.}
\begin{tabular}{cccccc}
\midrule
    & OLS      & HOLS     & STPS     & CV       & Univ     \\ \midrule
Est & -0.63724 & -0.65051 & -0.65184 & -0.63743 & -0.56759 \\
SE  & 0.11003  & 0.11512  & 0.14784  & 0.13480  & 0.08642  \\ \midrule
    & DBMM     & ZZ       & Double   & PODS     & DML      \\ \midrule
Est & -0.65184 & -0.65184 & -0.70252 & -0.66724 & -0.58113 \\
SE  & 0.14784  & 0.14784  & 0.14019  & 0.10230  & 0.15045  \\ \midrule
\end{tabular}
\label{460}
\end{table}

\section{Conclusion}
In this study, we analyzed theoretical and numerical properties of the debiased Lasso when the tuning parameter of the node-wise Lasso is much smaller than in previous studies.
Moreover, we proposed a data-driven tuning parameter selection procedure, called STPS.
 Our analysis shows that selecting a moderately small tuning parameter can mitigate the bias of the debiased Lasso without making variances diverge if the number of covariates is not too large.
 This implies that asymptotic normality for the debiased Lasso holds provided that the number of nonzero coefficients in linear regression models is $o(\sqrt{n/\log p})$. 
According to simulation studies and a real data example, STPS yields accurate confidence intervals. 
Therefore, STPS would be a useful tool for high-dimensional statistical inferences.  
 
\section*{Acknowledgments} 
We would like to thank valuable comments from Yoshimasa Uematsu, Mototsugu Shintani, Wenjie Wang, and participants at the autumn meeting of the Japanese Federation of Statistical Science Associations, the autumn meeting of the Japanese Economic Association,  the Miyagi meeting of the Kansai Econometric Society, and The 5th International Conference on Econometrics and Statistics in Kyoto. Akira Shinkyu is supported by JST SPRING Grant Number JPMJFS2126. Naoya Sueishi is supported by JSPS KAKENHI Grant Number 22H00833. 
\appendix
\section*{Appendix}
\begin{proof}[Proof of Lemma \ref{lem1}]
For the node-wise Lasso estimate $\hat{\gamma}_1=(\shgamma_{1,2},\ldots,\shgamma_{1,p})^T \in \mathbb{R}^{p-1}$, let $\hat{S}_1 = \{ j: \hat{\gamma}_{1,j} \neq 0\}$ and $\hat{s}_1 = |\hat{S}_1|$.
	Moreover, let $X_{\hat{S}_1}$ denote the submatrix of $X$ whose columns correspond to the elements of $\hat{S}_1$.
Because each column vector of $X$ is in general position, all column vectors of $X_{\lhs_1}$ are linearly independent, and the node-wise Lasso selects at most $n$ variables when $p>n$. 
Because $X_{\lhs_1}^TX_{\lhs_1}$ is invertible with probability one, for $\lambda_1 \ge C_1/ \sqrt{n}$, we can write
\begin{align*}
\ttau_1^2&=\fn X_1^TQ_{\lhs_1}X_1+\lambda_1^2\kappa_{\lhs_1}^T\left (\fn X_{\lhs_1}^TX_{\lhs_1}\right)^{-1}\kappa_{\lhs_1} \\
&\geq \lambda_{\min}\left(\fn X_{\{1\}\cup \lhs_1}^TX_{\{1\}\cup \lhs_1}\right)+\frac{\shs_1}{n}\frac{C_1^2}{\phi_{\max}(\shs_1)}\\
&\geq  \phi_{\min}(1+\shs_1)+\frac{\shs_1}{n}\frac{C_1^2}{\phi_{\max}(\shs_1)},
\end{align*}
where $Q_{\hat{S}_1}=I-X_{\hat{S}_1}(X_{\hat{S}_1}^TX_{\hat{S}_1})^{-1}X_{\hat{S}_1}^T$. The second inequality holds because of the definition of restricted maximum eigenvalues and the fact that $(X_1^TQ_{\lhs_1}X_1/n)^{-1}$ is the first diagonal element of $(X_{\{1\}\cup \lhs_1}^TX_{\{1\}\cup \lhs_1}/n)^{-1}$. See A-74 of \cite{g2012}. The last inequality holds by the definition of restricted minimum eigenvalues. Therefore, we obtain
\begin{equation*}
\frac{1}{\ttau_1^2}\leq \min \left\{\frac{1}{\phi_{\min}(1+\shs_1)}, \frac{n}{\shs_1}\frac{\phi_{\max}(\shs_1)}{C_1^2} \right \}.
\end{equation*}
Let $A_1=\{\phi_{\min}(n/K)\geq C_{\min}\}$, $A_2=\{\phi_{\max}(n)\leq C_{\max}\}$, $G$=\{each column vector of $X$ is in general position\}, and
\begin{equation*}
T=\left\{\frac{1}{\ttau_1^2}\leq \min \left\{\frac{1}{\phi_{\min}(1+\shs_1)}, \frac{n}{\shs_1}\frac{\phi_{\max}(\shs_1)}{C_1^2} \right \}\right\}.
\end{equation*}
Notice that $G\subset T\cap \{\shs_1 \leq n\}$. Let $B_1=\{\shs_1<n/K\}$ and $B_2=\{n \geq \shs_1\geq n/K\}$. Then, we have $\{\shs_1 \leq n\}=B_1\cup B_2$ and $B_1\cap B_2=\emptyset$. Thus, we have
\begin{equation}
A_1\cap A_2\cap G\subset A_1\cap A_2\cap T \cap (B_1\cup B_2)\subset (A_1 \cap B_1 \cap T)\cup (A_2 \cap B_2 \cap T). \label{652}
\end{equation}
First, we focus on the event $A_1\cap B_1 \cap T$. Recall that $n$ is divisible by $K$ so that $1+\shs_1 \leq n/K$. Recall also that $\phi_{\min}(s)$ is nonincreasing in $s$. Therefore, on the event $A_1\cap B_1 \cap T$, we have $\phi_{\min}(1+\shs_1)\geq \phi_{\min}(n/K)\geq C_{\min}$ and $1/\ttau_1^2 \leq 1/\phi_{\min}(1+\shs_1)$. Hence, we obtain 
\begin{equation}
A_1 \cap B_1 \cap T \subset \left\{\frac{1}{\ttau_1^2} \leq \frac{1}{C_{\min}} \right\}. \label{656}
\end{equation}
Next, we focus on the event $A_2 \cap B_2 \cap T$. Recall that $\phi_{\max}(s)$ is a nondecreasing function in $s$. Therefore, on the event $A_2 \cap B_2 \cap T$, we have $n\geq \shs_1\geq n/K$, $\phi_{\max}(\shs_1)\leq \phi_{\max}(n)\leq C_{\max}$, and $1/\ttau_1^2\leq n \phi_{\max}(\shs_1)/(\shs_1C_1^2)$. Hence, we obtain 
\begin{equation}
A_2 \cap B_2 \cap T \subset \left\{\frac{1}{\ttau_1^2}\leq \frac{KC_{\max}}{C_1^2} \right\}. \label{660}
\end{equation}
By combining \eqref{652}, \eqref{656}, and \eqref{660}, we have
\begin{equation*}
A_1\cap A_2\cap G\subset \left\{\frac{1}{\ttau_1^2} \leq \frac{1}{C_{\min}} \right\} \cup \left\{\frac{1}{\ttau_1^2}\leq \frac{KC_{\max}}{C_1^2} \right\}  \subset \left\{\frac{1}{\ttau_1^2}\leq \frac{1}{C_{\min}}+\frac{KC_{\max}}{C_1^2} \right\}.
\end{equation*}
By the union bound and Assumption 1, we obtain 
\begin{align*}
P\left(\frac{1}{\ttau_1^2}\leq \frac{1}{C_{\min}}+\frac{C_{\max}K}{C_1^2}\right)&\geq P(A_1 \cap A_2 \cap G)=1-P(A_1^c \cup A_2^c \cup G^c)\\
&\geq 1-P(A_1^c)-P(A_2^c).
\end{align*}
By Assumptions 2 and 3, we have
\begin{equation*}
\liminf_{n\rightarrow \infty}P\left(\frac{1}{\ttau_1^2}\leq \frac{1}{C_{\min}}+\frac{C_{\max}K}{C_1^2}\right )=1.
\end{equation*}
This completes the proof of this lemma.
\end{proof}

\begin{proof}[Proof of Proposition \ref{prop1}]
Assumptions 1 and 4 (ii) are satisfied from Lemma 4 of \cite{t2013} and Theorem 6 of \cite{rz2013}, respectively. 
We now show that the condition of Proposition 1 implies Assumptions 2, 3, and 4 (i). As in the appendix of \cite{jm2018}, who follow Remark 5.40 of \cite{v2012},  we have 
\begin{equation*}
P\left(\phi_{\max}(k)\geq K_{\max}+C\sqrt{\frac{k}{n}}+\frac{t}{\sqrt{n}}\right)\leq 2\begin{pmatrix}
p \\
k
\end{pmatrix}\exp(-ct^2),
\end{equation*}
for any $t \geq 0$ and $1 \le k \le n$, where positive constants $C$ and $c$ depend on $K_{\max}$ but not on $n$. 
Notice that 
\begin{equation*}
\begin{pmatrix}
p \\ k
\end{pmatrix}=\frac{p!}{k!(p-k)!}\leq \frac{p^k}{k!}
\leq \left(\frac{p}{k}\right)^k\exp(k)
\end{equation*}
because $\exp(k)=\sum_{i=0}^{\infty}k^i/i!\geq k^k/k!$ for any positive integer $k$. 
Thus, for $p \le C_0n$, we have 
\begin{align*}
P\left(\phi_{\max}(n)\geq K_{\max}+C+\frac{t}{\sqrt{n}}\right) \leq 2\exp(n(\log C_0+1)-ct^2).
\end{align*}
Let 
\begin{equation*}
t=\sqrt{\frac{2n(\log C_0+1)}{c}}.
\end{equation*}
Then,
\begin{equation*}
n(\log C_0+1)-ct^2=n(\log C_0+1)-2n(\log C_0+1)=-n(\log C_0+1).
\end{equation*}
Hence, we have 
\begin{equation*}
P\left(\phi_{\max}(n)\geq K_{\max}+C+\sqrt{\frac{2(\log C_0+1)}{c}}\right)\leq 2\exp(-n(\log C_0+1)).
\end{equation*}
By taking $C_{\max}=K_{\max}+C+\sqrt{2(\log C_0+1)/c}$, the first part of the proof is completed. The analogous arguments show that Assumption 4 (ii) is also satisfied. Similarly, following Remark 5.40 of \cite{v2012}, we have
\begin{equation*}
P\left(\phi_{\min}(k)\leq K_{\min}-C\sqrt{\frac{k}{n}}-\frac{t}{\sqrt{n}}\right)\leq 2 \begin{pmatrix}
p \\ k
\end{pmatrix} \exp(-ct^2).
\end{equation*}
Hence, we have 
\begin{align*}
P\left(\phi_{\min}\left(\frac{n}{K}\right)\leq K_{\min}-\frac{C}{\sqrt{K}}-\frac{t}{\sqrt{n}}\right) \leq 2\exp\left(\frac{n}{K}(\log K+\log C_0+1)-ct^2\right)
\end{align*}
for any $K$.
Let 
\begin{equation*}
t=\frac{\sqrt{n}K_{\min}}{2}.
\end{equation*}
Then, 
\begin{equation*}
P\left(\phi_{\min}\left(\frac{n}{K}\right)\leq \frac{K_{\min}}{2}-\frac{C}{\sqrt{K}}\right)\leq 2\exp\left(-n \left(\frac{cK_{\min}^2}{4}-\frac{1}{K}\{\log K+\log C_0+1\}\right)\right).
\end{equation*}
By taking $K$ so that
\begin{equation}
\frac{K_{\min}}{2}-\frac{C}{\sqrt{K}}>0, \label{663}
\end{equation}
we have
\begin{equation}
\frac{cK_{\min}^2}{4}-\frac{1}{K}\{\log K+\log C_0+1\}>0. \label{666}
\end{equation}
Notice that constants on left hand sides of \eqref{663} and \eqref{666} are independent of $n$. Thus, the second part of the proof is completed by taking $C_{\min}=K_{\min}/2-C/\sqrt{K}.$
\end{proof}

\begin{proof}[Proof of Theorem \ref{thm1}]
	For a sufficiently large positive constant $C$ such that $C\geq\sigma_\epsilon \sqrt{C^*}$, set $\lambda_0 =C\sqrt{2 \log p/n}$. Under Assumption 4, standard arguments of normal distribution yield 
	\begin{equation*}
	P\left(\frac{1}{n}\|X^T\epsilon\|_\infty > \lambda_0\right)=O\left(\frac{1}{\sqrt{\log p}}\right),
	\end{equation*}
	and we have the oracle inequalities
	\begin{equation*}
	\fn\|X(\hbeta-\beta_0)\|^2\leq \frac{8s_0\lambda_0^2 }{\kappa_{\min}^2},\ \ \|\hbeta-\beta_0\|_1\leq  \frac{4s_0\lambda_0 }{\kappa_{\min}^2},
	\end{equation*}
	with probability tending to one. See Lemmas 6.1 and 6.3 and Theorem 6.1 in \cite{bv2011}. Recall that 
	\begin{equation*}
	\Delta_1=\sqrt{n}(\hsigma\htheta_1-e_1)^T(\beta_0-\hbeta).
	\end{equation*}
	Thus, it follows from Lemma 1 that for $s_0$ such that $s_0=o(\sqrt{n/\log p})$
	\begin{equation*}
	|\Delta_1| \leq \sqrt{n} \frac{\lambda_1}{\htau_1^2}\|\hbeta-\beta_0\|_1= o_P(1).
	\end{equation*}
	Lemma 1 also yields
	\begin{equation*}
	\homega_{1,1}=\frac{\ttau_1^2}{\htau_1^4}\leq \frac{1}{\ttau_1^2}=O_P(1).
	\end{equation*}
	This completes the proof.
\end{proof}

\begin{proof}[Proof of Corollary \ref{cor1}]
The studentized debiased Lasso can be expressed as follows: 
\begin{equation*}
    \frac{\sqrt{n}(\sbhat_1-\beta_{01})}{\shsigma_\epsilon \homega_{1,1}^{1/2}}=\frac{1}{\shsigma_\epsilon \homega_{1,1}^{1/2}}\frac{1}{\sqrt{n}}\htheta_1{\! \!}^ TX^T\epsilon+ \tilde{\Delta}_1,
\end{equation*}
where
\begin{equation*}
    \tilde{\Delta}_1=\frac{\sqrt{n}(\hsigma\htheta_1-e_1)^T(\beta_0-\hbeta)}{\shsigma_\epsilon \homega_{1,1}^{1/2}}.
\end{equation*}
By the same argument in the proof of Theorem 1, we obtain $\tilde{\Delta}_1=o_P(1).$ Let 
\begin{equation*}
    Z_1=\frac{1}{\sigma_\epsilon \homega_{1,1}^{1/2}}\frac{1}{\sqrt{n}}\htheta_1{\! \!}^ TX^T\epsilon \sim N(0,1).
\end{equation*}
By the same argument of \cite{jm2014}, we obtain
\begin{align*}
    P\left(\frac{\sqrt{n}(\sbhat_1-\beta_{01})}{\shsigma_\epsilon \homega_{1,1}^{1/2}} \leq z\right)&=P\left(\frac{\sigma}{\shsigma_\epsilon}Z_1 + \tilde{\Delta}_1 \leq z\right)\leq P\left(Z_1\leq \frac{\shsigma_\epsilon }{\sigma_\epsilon}  (z+\delta)\right)+P\left(|\tilde{\Delta}_1|>\delta\right)\\
    &\leq \Phi(\delta (1+|z|)+\delta^2+z)+P\left(\left|\frac{\shsigma_\epsilon}{\sigma_\epsilon}-1\right|>\delta\right)+P(|\tilde{\Delta}_1|>\delta)
\end{align*}
for any $\delta>0$. By taking the limit supremum as $n\rightarrow \infty $ at first and the limit as $ \delta \rightarrow 0$ subsequently, we obtain  
\begin{equation*}
    \limsup_{n \rightarrow \infty} P\left(\frac{\sqrt{n}(\sbhat_1-\beta_{01})}{\shsigma_\epsilon \homega_{1,1}^{1/2}} \leq z\right) \leq \Phi(z).
\end{equation*}
Next, for any $\delta>0$ and $C>0$, we have
\begin{align*}
    \Phi(z-\delta)&=P(Z_1+ \delta \leq z )=P\left(\frac{\shsigma_\epsilon}{\sigma_\epsilon}\frac{1}{\shsigma_\epsilon \homega_{1,1}^{1/2}} \frac{1}{\sqrt{n}}\htheta_1{\!\!}^TX^T\epsilon +\delta \leq z \right)\\
    &\leq P\left(\frac{\sqrt{n}(\sbhat_1-\beta_{01})}{\shsigma_\epsilon \homega_{1,1}^{1/2}} \leq z\right) +P\left(|\tilde{\Delta}_1|>\frac{\delta}{2}\right)+P\left(\left|\frac{\shsigma_\epsilon}{\sigma_\epsilon}-1\right|>\frac{\delta}{2C^2}\right)\\
    &+P\left(\left|\frac{1}{\homega_{1,1}^{1/2}} \frac{1}{\sqrt{n}} \htheta_1{\!\!}^TX^T\epsilon \right|>C\right)+P\left(\frac{1}{\shsigma_\epsilon}>C\right).
\end{align*}
By taking the limit infimum as $n\rightarrow \infty$ at first, $C\rightarrow \infty$ subsequently, and $\delta \rightarrow 0$ afterward, we have 
\begin{equation*}
    \Phi(z)\leq \liminf_{n \rightarrow \infty} P\left(\frac{\sqrt{n}(\sbhat_1-\beta_{01})}{\shsigma_\epsilon \homega_{1,1}^{1/2}} \leq z\right).
\end{equation*}
This completes the proof.
\end{proof}

\begin{proof}[Proof of Theorem \ref{thm2}]
It is sufficient to show that $\tilde{\Delta}_1(\lambda_1)=o_P(1)$ and $\homega_{1,1}(\lambda_1)=O_P(1).$ Recall that $f(\lambda):=\|\hsigma\htheta_1(\lambda)-e_1\|_\infty/\homega_{1,1}^{1/2}(\lambda)$ and $\eta_1^*:=\ttau_1(\lambda_{\text{CV}})/\sqrt{n}$. 
First, suppose that $f(\lambda)> \eta_1^*$ for all $\lambda \in \Lambda$. Then, our procedure selects $\lambda_1=\lambda_{\min}$, and the KKT conditions for the node-wise Lasso yield
\begin{equation*}
	f(\lambda_1)= \frac{\|\hsigma\htheta_1(\lambda_{\min})-e_1\|_\infty}{\homega_{1,1}^{1/2}(\lambda_{\min})}\leq \frac{\lambda_{\min}}{\ttau_1(\lambda_{\min})}. 
\end{equation*}
Second, suppose that $f(\lambda)\leq \eta_1^*$ for some $\lambda \in \Lambda$. 
Because $\ttau_1^2(\lambda)$ is a nondecreasing function in $\lambda \in \Lambda$, we obtain
	\begin{equation*}
	f(\lambda_1)\leq f(\lambda^*)\leq \eta_1^*\leq \frac{\ttau(\lambda_{\max})}{\sqrt{n}}=\frac{1}{\sqrt{n}}\left(\frac{1}{n}\|X_1\|^2\right)^{1/2},
	\end{equation*}
	where the fourth equality follows from $\shgamma_1(\lambda_{\max})=0$ under Assumption 1.  \par

	Consequently, for the tuning parameter $\lambda_1$ selected by our procedure, we obtain 
	\begin{equation*}
	f(\lambda_1)\leq \max\left\{\frac{\lambda_{\min}}{\ttau_1(\lambda_{\min})}, \frac{1}{\sqrt{n}}\left(\frac{1}{n}\|X_1\|^2\right)^{1/2}\right\}=O_P\left(\frac{1}{\sqrt{n}}\right),
	\end{equation*}
	by Lemma 1. Therefore, by H\"{o}lder inequality,
	\begin{equation*}
	|\tilde{\Delta}_1(\lambda_1)|\leq f(\lambda_1)\frac{1}{\shsigma_\epsilon}\|\shbeta-\beta_0\|_1=O_P\left(\sqrt{\frac{s_0^2 \log p}{n}}\right)=o_P(1).
	\end{equation*}
	Moreover, we obtain
	\begin{equation*}
	\homega_{1,1}(\lambda_1)\leq \frac{1}{\ttau_1^2(\lambda_1)}\leq \frac{1}{\ttau_1^2(\lambda_{\min})}=O_P(1),
	\end{equation*}
	by Lemma 1. This completes the proof.
\end{proof}

\bibliographystyle{apalike} 
\bibliography{biblio}
\end{document}